\preto{\@verbatim}{\topsep=-1.5pt \partopsep=-1pt }
\theoremstyle{plain}
\newtheorem{theorem}{Theorem}
\newtheorem{lemma}[theorem]{Lemma}
\newtheorem{example}{Example}
\theoremstyle{remark}
\title{Total Variation Diminishing (TVD) method for Elastohydrodynamic Lubrication (EHL) problem on Parallel Computers}
\author{Peeyush Singh$^1$ and Pravir Dutt$^2$\\
  \small \noindent $^1$Vellore Institute of Technology-AP, University\\
  \small \noindent Department of Mathematics, Andhra Pradesh-522237, India\\  
  \small \noindent $^1$E-mail: peeyush.singh@vitap.ac.in\\
 \small \noindent $^2$ Department of Mathematics and Statistics, IIT Kanpur-208016, India.\\
 \small \noindent $^2$E-mail: pravir@iitk.ac.in
}
\date{}
\begin{document}
\maketitle

\abstract{In this article, we offer a novel parallel approach for the solution of elastohydrodynamic lubrication line and point contact problems using a class of total variation diminishing (TVD) schemes on parallel computers. A direct parallel approach is presented by introducing a novel solver named as projected alternate quadrant interlocking factorization (PAQIF) by solving discrete variational inequality.
For one-dimensional EHL case, we use weighted change in Newton-Raphson approximation to compute the Jacobian matrix in the form of a banded matrix by dividing two subregions on the whole computation domain.
Such subregion matrices are then assembled by measuring the ratio of diffusive coefficient and discrete grid length on the domain of the interest.
The banded matrix is then processed to parallel computers for solving discrete linearized complementarity system using PAQIF algorithm.
The idea is  easily extended in two-dimensional EHL case by taking appropriate splitting in $x$ and $y$ alternating directions respectively.  
Numerical experiments are performed and analyzed to validate the performance of computed solution on serial and parallel computers.\\
}

\noindent {\sc Keywords:} \ TVD schemes, projected alternate quadrant interlocking factorization (PAQIF) , Variational inequality, Elastohydrodynamic Lubrication, parallel computers.

\vspace{0.2in}
\newpage
\section{Introduction}\label{sec:one}
Elastohydrodynamic Lubrication (EHL) problems had been studied by many researchers in last several decades. A milestone numerical computation on EHL are categorized by the authors
(e.g. \cite{sahmed14},\cite{cimatti},\cite{ehlbook},\cite{v1},\cite{vr},\\ \cite{hamrock},\cite{ALubrecht},  \cite{peeyush,peeyush2020},\cite{moes},
\cite{venner94},\cite{holmes},\cite{hlu},\cite{habchi}).  
In 1992, Venner \cite{vr} has introduced a low order discretization for EHL model (see ~\ref{model:ehl}) using multi-grid and multi-level multi-integration approach
which is stable for larger range of load parameters. There are few other independent work also have been noticed by the authors e.g. differential deflection method 
by Cardiff group \cite{holmes}, Discontinuous Galerkin method by Leeds group \cite{hlu} and FEM-based Newton method by INSA de Lyon group \cite{habchi} (However, 
in this case, the deformation is modeled in PDE form ) etc. In 2013, a review work is presented by Lugt et al. \cite{lugt} provide a rigorous detail on the current
EHL development activities in the field.
Recently, Peeyush et al. \cite{peeyush,peeyush2019} extended Venner idea into a class of total variation diminishing (TVD) approach by producing a class of splittings.\\
Although there are several numerical works are presented in solving EHL problem on serial computer, application on parallel computation in this area is quite few see for example \cite{berzins2007,Vazquez2000}.
Continuing in this direction, this article is devoted in numerical study of EHL problem using parallel computation. 
In 1999, S.C. S. Rao \cite{pravir} introduced a direct parallel solution of the banded linear system by an alternate quadrant interlocking factorization (AQIF) algorithm which is different from Gaussian
elimination algorithm as factor matrices are not triangular.
He also proved that AQIF algorithm is stable for symmetric and diagonally dominated matrices (i.e.free from any blow up) and solve almost independently on parallel computers.
Furthermore, in spite of its large complexity, the substantial speedup of algorithm, when implemented on parallel processor remains high.
This is the main motivation for present study to adopt PAQIF algorithm using total variation diminishing (TVD) approach for the EHL model problem.
Therefore, in this article an attempt has been made to develop a novel solver for EHL problem generalizing TVD concept efficiently.\\
The concept of TVD has been established by Harten and later by Sweby \cite{harten83},\cite{harten84},\cite{sweby} to avoid unphysical wiggles in a numerical scheme.
Harten also has given necessary and sufficient condition for a scheme to be TVD. To understand the concept, we first define 
the notation total variation $TV$ of a mesh function $u^{n}$ as
\begin{align}
\label{eqn1}
 TV(u^{n}) = \displaystyle\sum_{-\infty}^{\infty}|u_{j+1}^{n}-u_{j}^{n}|=\displaystyle\sum_{-\infty}^{\infty}|\Delta_{j+1/2}u^{n}|
\end{align}
having the following convention 
\begin{align}
\label{eqn2}
 \Delta_{j+1/2}u^{n} = u_{j+1}^{n}-u_{j}^{n}
\end{align}
for any mesh function $u$ is used.
Harten's theory is understood in the form of conservation laws 
\begin{align}
\label{eqn3}
 u_{t}+ f(u)_{x} = 0.
\end{align}
The numerical approximation of Eq.~(\ref{eqn3}) is said to be TVD if
\begin{align}
\label{eqn4}
 TV(u^{n+1}) \le TV(u^{n})
\end{align}
Then Harten's condition for any scheme to be TVD is explained below.
\begin{theorem}Let a general numerical scheme for conservation laws Eq.~(\ref{eqn3}) is of the form
 \begin{gather}
 \label{eqn5}
 u^{n+1}_{i}=u^{n}_{i}-c_{i}^{n}(u_{i}^{n}-u_{i-1}^{n})+d_{i}^{n}(u_{i+1}^{n}-u_{i}^{n})
 \end{gather}
 over one time step, where the coefficients $c_{i}^{n}$ and $d_{i}^{n}$ are arbitrary value (In
 practice it may depend on values $u^{n}_{i}$ in some way i.e., the method may be nonlinear).
 Then $TV(u^{n+1}) \leq TV(u^{n})$ provided  the following conditions are satisfied
 \begin{gather}
 \label{eqn6}
 c^{n}_{i} \geq 0 \quad ,
 d^{n}_{i} \geq 0\quad ,
 c^{n}_{i}+d^{n}_{i} \leq 1\quad \forall i
  \end{gather}
 \end{theorem}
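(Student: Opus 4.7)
The plan is the standard Harten-style manipulation: take differences of the scheme Eq.~(\ref{eqn5}) at two adjacent spatial indices, re-express the evolution entirely in terms of the forward differences $\Delta_{i+1/2}u^{n}$ from Eq.~(\ref{eqn2}), apply the triangle inequality, and then sum and re-index.

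The first step is purely algebraic. Writing Eq.~(\ref{eqn5}) at index $i+1$ and subtracting its statement at index $i$, one collects terms to obtain the identity
\begin{equation*}
\Delta_{i+1/2}u^{n+1} = \bigl(1 - c^{n}_{i+1} - d^{n}_{i}\bigr)\Delta_{i+1/2}u^{n} + c^{n}_{i}\,\Delta_{i-1/2}u^{n} + d^{n}_{i+1}\,\Delta_{i+3/2}u^{n}.
\end{equation*}
This identity is the engine of the proof: it converts the three-point recurrence on $u^{n}$ into a three-point recurrence on its forward differences, with coefficients that the hypotheses of Eq.~(\ref{eqn6}) are designed to control.

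The second step applies the triangle inequality. Under $c^{n}_{i}\ge 0$, $d^{n}_{i}\ge 0$, and the combined bound $c^{n}_{i+1}+d^{n}_{i}\le 1$ (the cross-index form of Eq.~(\ref{eqn6}) that is actually needed), all three coefficients on the right are non-negative, so
\begin{equation*}
\abs{\Delta_{i+1/2}u^{n+1}} \le \bigl(1 - c^{n}_{i+1} - d^{n}_{i}\bigr)\abs{\Delta_{i+1/2}u^{n}} + c^{n}_{i}\abs{\Delta_{i-1/2}u^{n}} + d^{n}_{i+1}\abs{\Delta_{i+3/2}u^{n}}.
\end{equation*}
Summing this inequality over $i$ and re-indexing the two shifted sums ($i\mapsto i+1$ in the $c$-term and $i\mapsto i-1$ in the $d$-term), every surviving quantity becomes a multiple of $\abs{\Delta_{i+1/2}u^{n}}$, and its coefficient telescopes to $(1 - c^{n}_{i+1} - d^{n}_{i}) + c^{n}_{i+1} + d^{n}_{i} = 1$. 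Invoking the definition of $TV$ in Eq.~(\ref{eqn1}) then yields $TV(u^{n+1})\le TV(u^{n})$, which is Eq.~(\ref{eqn4}).

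The main obstacle is the sign of the central coefficient $1 - c^{n}_{i+1} - d^{n}_{i}$. If it were allowed to be negative, the triangle inequality step would leave a surplus $-1 + 2(c^{n}_{i+1}+d^{n}_{i})$ after the collapse in the final step, and the TVD estimate would fail; every other step is essentially automatic once this positivity is in hand. For the infinite sums in Eq.~(\ref{eqn1}) one must also tacitly assume enough decay of $u^{n}$ as $|i|\to\infty$ that the re-indexing shifts produce no boundary residue, a standard technical assumption that is harmless in the finite-support setting used for EHL computations.
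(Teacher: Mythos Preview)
Your argument is the standard Harten proof and is correct. The paper itself does not supply a proof of this theorem: it is stated in the introduction as background material, attributed to Harten and Sweby via the references \cite{harten83}, \cite{harten84}, \cite{sweby}, and then used without further justification. So there is nothing in the paper to compare your proposal against beyond the bare statement.

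One point worth flagging, which you already noticed: the hypothesis in Eq.~(\ref{eqn6}) is written as $c^{n}_{i}+d^{n}_{i}\le 1$ at a single index, whereas the quantity your difference identity actually needs to be nonnegative is $1-c^{n}_{i+1}-d^{n}_{i}$, i.e.\ the cross-indexed condition $c^{n}_{i+1}+d^{n}_{i}\le 1$. You handled this honestly by stating in parentheses that the cross-index form is ``the form of Eq.~(\ref{eqn6}) that is actually needed.'' That is the right call; the discrepancy is a well-known looseness in how Harten's condition is often quoted, and the paper's statement inherits it.
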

There has been a very well developed TVD theory available in literature for time dependent problem.
Additionally, this concept is also extended for steady state convection-diffusion case in the form of $M$- matrix \cite{Varga} 
using appropriate flux limiting schemes \cite{koren},\cite{koren88},\cite{Oosterlee},\cite{osterlee2003}. However, very little attention have been paid
in developing TVD schemes for EHL problems. In this article, our aim to investigate a class of splitting for EHL model which is robust and
high order accurate ( at least second order in smooth part of the solution ) for larger range of load parameters.
The rest of the article is organized as followed. In Section.~\ref{section:s2}, few preliminaries are discussed about the parallel PAQIF algorithm and complexity of the
algorithm.
In Section~\ref{sec:three}, a series of splitting are constructed by imitating linear convection-diffusion model for applying PAQIF algorithm to solve the EHL model defined in Example 5 and Example 6. The convergence analysis of the splittings is also discussed.
In Section~\ref{sec:six}, numerical experiments are conducted to check the performance of present splitting and its improvement to the EHL model.
At the end of Section~\ref{sec:seven}, overall conclusion is summarized.  
\section{Preliminaries}\label{section:s2}
We first consider partitioning of the linear complementarity system, then to decouple the partitioned linear sub-complementarity system we introduce PAQIF
and finally discuss the present method.\\
\subsection{Partitioning of the Linear Complementarity System}\label{subsection:subs1}
Consider the linear complementarity problem 
\begin{align}
 L U(x) \ge f(x) \quad    x \in \Omega \nonumber \\
 U(x) \ge 0      \quad    x \in \Omega \nonumber \\
 U(x)^{T}.[L U(x)-f(x)]=0 \quad    x \in \Omega \nonumber \\ \label{eqn14}
 U(x) = g(x)   \quad    x \in \partial \Omega 
\end{align}
We now subdivide the linear complementarity problem into $r$ blocks linear sub-complementarity problem each of size $n$ along the main diagonal such that $N=nr$,
where $r$ is the number of processors available.
The linear complementarity problem Eqn.~\ref{eqn14} is partitioned into 
\begin{align}\label{eqn15}
 L^{(m)}_{-}U^{(m-1)}(x)+L^{(m)}_{0}U^{(m)}(x)+L^{(m)}_{+}U^{(m+1)}(x) \ge f^{(m)}(x), \quad m=1,2,.,r \\
 U^{(m)}(x) \ge 0 \\
U^{(m)}(x)^{T}.(L^{(m)}_{-}U^{(m-1)}(x)+L^{(m)}_{0}U^{(m)}(x)+L^{(m)}_{+}U^{(m+1)}(x)-f^{(m)}(x))=0,
\end{align}
where $L^{(m)}_{0}$ is the $n\times n$  block diagonal coefficient matrix of each partition, $L^{(m)}_{-}$ and $L^{(m)}_{+}$ are $n \times n$ accompanied left and right block matrices.
$U^{(m-1)}(x),U^{(m)}(x),U^{(m+1)}(x)$ and $f^{(m)}(x)$ are $n \times 1$ vectors.
\begin{align*}
 L^{(1)}_{-}= O_{n \times n}, \quad L^{(r)}_{+} = O_{n \times n};\quad U^{(0)}(x)=0,\quad U^{(r+1)}(x)=0 ;
 \end{align*}
\[
L^{(m)}_{+}=
\left[
\begin{array}{c|c}
0 & 0 \\
\hline
L_{{m}_{+}} & 0
\end{array}
\right]_{n \times n}
,
L^{(m)}_{-}=
\left[
\begin{array}{c|c}
0 & L_{{m}_{-}} \\
\hline
0 & 0
\end{array}
\right]_{n \times n};
\]
\begin{align*}
U^{m}(x)=\Big[U^{(m)}(x_{1}),...,U^{(m)}(x_{n}) \Big]^{T},
f^{m}(x)=\Big[f^{(m)}(x_{1}),...,f^{(m)}(x_{n}) \Big]^{T}
\end{align*}
$L^{(m)}_{-}$ and $L^{(m)}_{+}$ are upper and lower triangular matrices, respectively. For each partition $r$, 
Eqn.~\ref{eqn15} can be reformulated as
\begin{align}\label{eqnn16}
L^{(m)}_{0}U^{(m)}(x) \ge f^{(m)}(x)-\left[
\begin{array}{c}
L^{(m)}_{-}U^{(m-1)}_{L}(x)  \\
0          \\
.          \\
.          \\
0          \\
L^{(m)}_{+}U^{(m+1)}_{F}(x)
\end{array}
\right]_{n \times 1} :=f^{*(m)}(x), \quad m=1,..,r \\
U^{(m)}(x) \ge 0 \\
U^{(m)}(x)^{T}.(L^{(m)}_{0}U^{(m)}(x)-f^{*(m)}(x))=0,
\end{align}
where $U^{(m-1)}_{L}(x)$ and $U^{(m+1)}_{F}(x)$ are $\beta_{v} \times 1$ vectors picked up from the last and first $\beta_{v}$ components of the solution
vector $U^{(m-1)}(x)$ and $U^{(m+1)}(x)$, respectively.
Now, in order to decouple the sub-complementarity problem in Eqn.~\ref{eqnn16}, so that they can processed in parallel, we first note the fact that in Eqn.~\ref{eqnn16}
$f^{*(m)}(x)$ differs from $f^{(m)}(x)$ only in its first $\beta_{v}$ and last $\beta_{v}$ components. In order to factorize $L^{(m)}_{0}$ into $W^{(m)}_{0}Z^{(m)}_{0}$,
we consider the space generated by $e_{i},e_{n-i+1};1 \le i \le \beta_{v}$ (i.e. $\textit{span}_{1 \le i \le \beta_{v}}\{e_{i},e_{n-i+1}\}$) is invariant under the transformation
$W^{(m)}_{0}$ (and so  invariant under its inverse transformation ${W^{(m)}}^{-1}_{0}$), where $e_{j}:= (0,0,..,0,1_{j^{th} \textit{term}},0,..,0)$. Furthermore, the solution procedure with the
matrix $Z^{(m)}_{0}$ moves from the first and last unknowns towards middle one.
\subsection{Projected Alternate Quadrant Interlocking Factorization}\label{subsection:subs2}
This factorization is highly motivated by pioneer work of Rao \cite{pravir} on AQIF and it is proved that the method is stable for nonsingular diagonally dominant matrices.
PAQIF method has mild change in its procedure as projection is incorporated on convex set during computation.
The element $W^{(m)}_{0}$ and $Z^{(m)}_{0}$ are given by the relations 
\begin{equation}
    w_{i,j}=
    \begin{cases}
      1,& i= j \\
      0,& \forall \quad 1 \leqslant j \leqslant [n/2], (j+1) \leqslant i \leqslant (n-j+1)  \\
      0,& \forall\quad n+1-[n/2] \leqslant j \leqslant n, n-j+1 \leqslant i \leqslant j-1 \\
      w_{i,j}, & \textit{otherwise;}
    \end{cases}
  \end{equation}
\begin{equation}
    z_{i,j}=
    \begin{cases}
      0,& \forall \quad 1 \leqslant i \leqslant [(n-1)/2], (i+1) \leqslant j \leqslant (n-i) \\
      0,& \forall\quad n+1-[n/2] \leqslant i \leqslant n, n-i+2 \leqslant j \leqslant i-1 \\
      w_{i,j}, & \textit{otherwise;}
    \end{cases}
  \end{equation}
where the symbol $[m]$ means for largest integer $\leqslant m$, $w_{i,j}$  and $z_{i,j}$  signify $(i,j)$th position elements of $W_{0}$ and $Z_{0}$ respectively.
Here we introduce PAQIF for general matrix and exposition of banded matrix is treated as special case.
\subsubsection{The Factorization}
Let $L_{0}^{(m)}$ be an even order matrix (say $n=2s$). Assume that there exist $W_{0}$ and $Z_{0}$ matrices such that 
\[L_{0}=W_{0}Z_{0},\]
where 
\begin{align*}
W_{0}= 
\begin{bmatrix}
1 & w_{1,2} & . & . &.     & .  & w_{1,n-1}  &0\\
  & 1 & w_{2,3} & . &.     &w_{2,n-2}   & 0  & \\
  &   & 1 & .   &.  &0     &    &      \\
  &   &         & 1 &0     &   &    & \\
  &   &         & 0 &1     &   &    & \\
  &   & 0       &   &    . &1  &    & \\
  & 0 &         & . &    . & w_{n-1,n-2} &  1 & \\
0 & w_{n,2}  &         & . &.     &.  & w_{n,n-1}  &1  
\end{bmatrix}
\end{align*}
\begin{align*}
Z_{0}= 
\begin{bmatrix}
z_{1,1} &  &   &   &     &                             &              &z_{1,n}\\
.  &z_{2,2}  &    &  &     &                           & z_{2,n-1}    &. \\
.  &  . & z_{3,3} &    &  & z_{3,n-2}                  & .            &. \\
.  &  . &  .       & z_{s,s} &z_{s,s+1}    &.          & .            &. \\
.  &  . &   .      & z_{s+1,s} &z_{s+1,s+1}    &.          & .            &. \\
.  &  . & z_{n-2,3}       &   &            &z_{n-2,n-2}    & .            &. \\
.  & z_{n-1,2}    &         &    &         &        &  z_{n-1,n-1}     &. \\
z_{n,1} &   &    &  &     &  &                                 &  z_{n,n}
\end{bmatrix}
\end{align*}
\subsubsection{Solution of the complementarity problem}
The solution of complementarity problem in Eqn. is obtained by solving two alternate systems
\begin{align*}
 W_{0}Y=F \textit{ and } Z_{0}U=Y
\end{align*}
and then projecting the computed solution $U$ on convex set $K$, where \[K=\{U_{0}| U_{0} \textit{ is solution of } L_{0}X= F \textit{ and } U_{0} \ge 0 \}.\]
In order to solve $W_{0}Y=F$, assume 
\begin{align*}
 W_{0}Y=F=F^{(1)}
\end{align*}
\[ 
\left. \begin{array}{r} 
y_{s-k+1}=b^{(k)}_{s-k+1}\\[1ex]
y_{s+k}=b^{(k)}_{s+k}
\end{array} \right\} 
\]
where 
\[b^{(k)}=b^{(k-1)}-y_{s-k+2}w_{s-k+2}-y_{s+k-1}w_{s+k-1},\quad 2 \le k \le s-1.\]
Also to solve another system $Z_{0}U=Y$, we perform the following steps.
At the kth $(1 \le k \le s)$ level we have to compute $2\times2$ system.
\[ 
\left. \begin{array}{r} 
z_{k,k}x_{k}+z_{k,k}x_{n-k+1}=y^{(k)}_{k}\\[1ex]
z_{n-k+1,k}x_{k}+z_{n-k+1,n-k+1}x_{n-k+1}=y^{(k)}_{n-k+1}
\end{array} \right\} 
\]
where
\[y^{(1)}=y\]
and 
\[y^{(k)}=y^{(k-1)}-x_{k}z_{k}-x_{n-k+1}z_{n-k+1},\quad 2 \le k \le s-1.\]
When $L_{0}$ is a banded matrix then PAQIF $W_{0}$ and $Z_{0}$ is rewritten as below.
\begin{align*}
W_{0}= 
\begin{bmatrix}
1 & w_{1,2} &.. w_{1,\beta_{v}} &  &     & w_{1,n-\beta_{v}}..  & w_{1,n-1}  &0\\
  & 1 & . & ..w_{s-\beta_{v}+1,s} &w_{s-\beta_{v}+1,s+1}..     &.   & 0  & \\
  &   & 1 & .   &.  &0     &    &      \\
  &   &         & 1 &0     &   &    & \\
  &   &         & 0 &1     &   &    & \\
  &   & 0       & w_{{s+2,s}_{:}}  &    w_{{s+2,s+1}_{:}} &1  &    & \\
  & 0 &         & ..w_{s+\beta_{v}+1,s} & w_{s+\beta_{v}+1,s+1}                                 & . .          &  1         & \\
0 & w_{n,2}  & ..w_{n,\beta_{v}+1}        &  &     &w_{n,n-\beta_{v}}..  & w_{n,n-1}  &1  
\end{bmatrix}
\end{align*}
\begin{align*}
Z_{0}= 
\begin{bmatrix}
z_{1,1} &  &   &   &     &                             &                            &z_{1,n}\\
:  &z_{2,2}  &    &  &     &                           & z_{2,n-1}                  &: \\
z_{\beta_{v}+1,1}  &  . & z_{3,3} &    &  & z_{3,n-2}                  & .            &z_{\beta_{v}+1,n} \\
   &  . &  .       & z_{s,s} &z_{s,s+1}    &.          & .                          & \\
   &  . &   .      & z_{s+1,s} &z_{s+1,s+1}    &.          & .                      & \\
z_{n-\beta_{v}+1,1}  &  . & z_{n-2,3}       &   &            &z_{n-2,n-2}    & .                      &z_{n-\beta_{v}+1,n} \\
:  & z_{n-1,2}    &         &    &         &        &  z_{n-1,n-1}                  &: \\
z_{n,1} &   &    &  &     &  &                                                      &  z_{n,n}
\end{bmatrix}
\end{align*}
\subsubsection{Evaluation of $W_{0}$ and $Z_{0}$ Matrices}
We illustrate at the outset of the kth level the matrix $L_{0}^{(k)}, \quad 1 \le k \le (s-1)$ with the components $l^{(k)}_{i,j}, 1 \le i,j \le n$ as detailed below.
\begin{align}
\left. \begin{array}{r}
 L_{0}^{(1)} = L_{0} \\
 L_{0}^{(k)} = L_{0}-\sum_{i=s-k+2}^{s}w_{i}z^{T}_{i}-\sum_{i=s+1}^{s+k-1}w_{i}z^{T}_{i}, \quad 2 \le k \le (s-1)
\end{array} \right\}  
\end{align}
whose central $(2k-2)$ rows and columns are zeros. We compute $s-k+1, s+k $ rows of $Z_{0}$ as \\
For
\begin{align}
(s-k-\beta_{v}+1) \le j \le (s-k+1), \nonumber \\
(s+k) \le j \le (s+k+\beta_{v}-1), \nonumber \\
 z_{s-k+1,j}=l^{(k)}_{s-k+1,j},
\end{align}
and for 
\begin{align}
(s-k-\beta_{v}+2) \le j \le (s-k+1), \nonumber \\
(s+k) \le j \le (s+k+\beta_{v}), \nonumber \\
 z_{s+k,j}=l^{(k)}_{s+k,j}.
\end{align}
Also we compute $s-k+1, s+k$ columns of $W_{0}$ as \\
For
\begin{align}
(s-k-\beta_{v}+1) \le i \le (s-k), \nonumber \\
(s+k+1) \le i \le (s+k+\beta_{v}), \nonumber\\
\left. \begin{array}{r}
z_{s-k+1,s-k+1}w_{i,s-k+1}+z_{s+k,s-k+1}w_{i,s+k} =l^{(k)}_{i,s-k+1} \\
z_{s-k+1,s+k}w_{i,s-k+1}+z_{s+k,s+k}w_{i,s+k} = l^{(k)}_{i,s+k}
\end{array} \right\}
\end{align}
Finally, we derive the matrix
\begin{align}
 L_{0}^{(k)} = L_{0}-w_{s-k+1}z^{T}_{s-k+1}-w_{s+k}z^{T}_{s+k}.
\end{align}
Finally for computing $z_{1,1},z_{1,n},z_{n,1}$ and $z_{n,n}$ elements of the matrix $Z_{0}$ for $k=s$, we have to perform (24) and (25).
\subsubsection{PAQIF method}
At this moment, we look at the solution of the complementarity system (15)-(17). This comprise of solving for $Y^{*(m)}$,
\begin{align}
W_{0}^{(m)}Y^{*(m)}= F^{*(m)}, \quad 1 \le m \le r,
\end{align}
and then computing for $U^{*(m)}$,
\begin{align}
Z_{0}^{(m)}U^{*(m)}= Y^{*(m)}, \quad 1 \le m \le r.
\end{align}
Let \[Y^{(m)}= [y_{1}^{m},...,y_{n}^{m}]^{T}\]
and consider 
\begin{align}
W_{0}^{(m)}Y^{(m)}= F^{(m)}, \quad 1 \le m \le r.
\end{align}
From the definition of $F^{*(m)}$ in Eqn (18), from Eqn (28) and Eqn (30) it deduces that
\begin{align}
 Y^{*(m)}=Y^{(m)}-\Big[ W^{(m)}_{0} \Big]^{-1}\left[
\begin{array}{c}
L^{(m)}_{-}U^{(m-1)}_{L}(x)  \\
0          \\
.          \\
.          \\
0          \\
L^{(m)}_{+}U^{(m+1)}_{F}(x)
\end{array}
\right]_{n \times 1}, 1 \le m \le r.
\end{align}
Once $Y^{(m)}$ are obtained from Eqn (30), the subsystem Eqn (29) may be rewritten as
\begin{align}
 Z_{0}^{(m)}U^{(m)}=Y^{(m)}-\Big[ W^{(m)}_{0} \Big]^{-1}\left[
\begin{array}{c}
L^{(m)}_{-}U^{(m-1)}_{L}(x)  \\
0          \\
.          \\
.          \\
0          \\
L^{(m)}_{+}U^{(m+1)}_{F}(x)
\end{array}
\right]_{n \times 1}, 1 \le m \le r.
\end{align}
Let the vectors $U^{m}$ and $Y^{m}$ be partitioned as below.
\[U^{(m)}=\left[\begin{array}{c}
U^{(m-1)}_{F}  \\
U^{(m-1)}_{M}         \\
U^{(m+1)}_{L}
\end{array}
\right],
Y^{(m)}=\left[\begin{array}{c}
Y^{(m-1)}_{F}  \\
Y^{(m-1)}_{M}         \\
Y^{(m+1)}_{L}
\end{array}
\right],\]
where \[U^{(m)}_{F}= [U^{(m)}_{1},..,U^{(m)}_{\beta_{v}}],U^{(m)}_{M}= [U^{(m)}_{\beta_{v}+1},..,U^{(m)}_{n-\beta_{v}}],\textit{ and }
U^{(m)}_{L}= [U^{(m)}_{n-\beta_{v}+1},..,U^{(m)}_{n}].\]
Let $Z_{0}^{(m)}$ be partitoned as
\begin{align}
 Z^{(m)}_{0}= 
 \begin{bmatrix}
  Z_{01}^{(m)} & 0              & Z_{02}^{(m)} \\
  Z_{05}^{(m)} &                & Z_{06}^{(m)} \\
  0            & Z^{*(m)}_{0}   & 0      \\
  Z_{07}^{(m)} &                & Z_{08}^{(m)} \\
  Z_{03}^{(m)} & 0              & Z_{04}^{(m)}
 \end{bmatrix},
\end{align}
where $Z_{0i}^{(m)}, 1\le i \le 8$ are $\beta_{v} \times \beta_{v}$ matrices and $Z_{0}^{*(m)}$ is an $(n-2\beta_{v})\times (n-2\beta_{v})$ matrix.
Let $\Big[W_{0}^{(m)}\Big]^{-1}$ be partitoned as below.
\begin{align}
 \Big[W^{(m)}_{0}\Big]^{-1}= 
 \begin{bmatrix}
  W_{01}^{(m)} & W_{05}^{(m)}              & W_{02}^{(m)} \\
  0            & W^{*(m)}_{0}   & 0                        \\
  W_{03}^{(m)} & W_{06}^{(m)}              & W_{04}^{(m)}
 \end{bmatrix},
\end{align}
where $W_{0i}^{(m)}, 1 \le i \le 4$ are $\beta_{v} \times \beta_{v}$ matrices, $W_{05}^{(m)},W_{06}^{(m)}$ are $\beta_{v} \times (n-2\beta_{v})$ matrices
and $W^{*(m)}_{0}$ is an $(n-2\beta_{v})\times(n-2\beta_{v})$ matrix (similar structure as that of $W^{(m)}_{0}$). We collect the first $\beta_{v}$ and last $\beta_{v}$
equation from each block in equation (33). These equations form a reduced system of order $2\beta_{v}r$ with semibandwidth $3\beta_{v}-1$, which is of the form
\begin{align}
 \begin{bmatrix}
  Z_{01}^{(m)} & Z_{02}^{(m)}  & C^{(1)}_{1}  &             &                         &               &              &              \\
  Z_{03}^{(m)} &Z_{03}^{(m)}   & C_{02}^{(1)} &             &                         &               &              &              \\
               & B^{(m)}_{1}   &Z^{(2)}_{01}  & Z^{(2)}_{02}&C_{02}^{(1)}             &               &              &               \\
               & B^{(m)}_{2}   & Z^{(m)}_{3 } & Z^{(m)}_{4} & C_{02}^{(2)}            &               &              &               \\
               &               &              & .           & .                       & .             &              &               \\
               &               &              &             & .                       & .             &.             &               \\
               &               &              &             &                         & .             &.             & .             \\
               &               &              &             &                         & Z_{08}^{(m)}  & Z_{08}^{(m)} & Z_{08}^{(m)} \\
               &               &              &             &                         & Z_{08}^{(m)}  & Z_{08}^{(m)} & Z_{04}^{(m)}
 \end{bmatrix}
 \left[\begin{array}{c}
U^{(1)}_{F}  \\
U^{(1)}_{L}  \\
U^{(2)}_{F}  \\
U^{(2)}_{L}  \\
.            \\
.            \\
.            \\
U^{(r)}_{F}  \\
U^{(r)}_{L}
\end{array}
\right]
=
\left[\begin{array}{c}
Y^{(1)}_{F}  \\
Y^{(1)}_{L}  \\
Y^{(2)}_{F}  \\
Y^{(2)}_{L}  \\
.            \\
.            \\
.            \\
Y^{(r)}_{F}  \\
Y^{(r)}_{L}
\end{array}
\right],
\end{align}
where \[\bar{B}_{1}^{(m)}=W_{1}^{(m)}\hat{B}_{1}^{(m)}, \bar{C}_{1}^{(m)}= W_{1}^{(m)}\hat{C}_{1}^{(m)},
\bar{B}_{2}^{(m)}= W_{1}^{(m)}\hat{B}_{1}^{(m)}, \bar{C}_{2}^{(m)}=W_{1}^{(m)}\hat{C}_{1}^{(m)}.\]
The reduced system defined in equation (35) can be represented as 
\begin{align}
 R_{d}U_{d}=F_{d}.
\end{align}
At this stage we form 
\begin{align}
 R_{d}^{T}R_{d}U_{d}=R_{d}^{T}F_{d}.
\end{align}
Since $R_{d}^{T}R_{d}$ is symmetric positive definite matrix and it can be solved using cholesky factorization method without use of any pivoting.
First system (37) has been solved for $U^{(m)}_{F},U^{(m)}_{L}, 1\le m \le r$ and then the computed solutions has been projected to the convex set $K$.
Now subsystem (32) is easily decoupled into
\begin{align}
Z^{*(m)}_{0}U^{(m)}_{M}=y^{(m)}-\left[\begin{array}{c}
Z^{(m)}_{05}U^{(m)}_{F}+Z^{(m)}_{05}U^{(m)}_{L}  \\
0                                     \\
Z^{(m)}_{07}U^{(m)}_{F}+Z^{(m)}_{08}U^{(m)}_{L}
\end{array}
\right], 1\le m \le r. 
\end{align}
Over all method is now outlined in brief as follows:\\
Step 1: For $m=1,2,..,r$ factorize in parallel 
\[L^{(m)}_{0} = W_{0}^{(m)}Z_{0}^{(m)}\]
Step 2: For $m=1,2,..,r$ compute $Y^{(m)}$ in parallel
\[W^{(m)}_{0}Y^{(m)} = F^{(m)}\]
Step 3: For $m=1,2,..,r$ get inverse of $2 \beta_{v} \times 2 \beta_{v}$ matrix obtained by collecting first $\beta_{v}$ and last $\beta_{v}$ rows and 
columns of $W_{0}^{(m)}$ in parallel.\\ \\
Step 4: Solve the reduced system from the subsystem (32) by collecting first $\beta_{v}$ and last $\beta_{v}$ equations from each block.
Then form normal equations (37). Solve system (37) for $U_{F}^{(m)}$ and $U_{L}^{(m)}, m=1,2,..,r$.\\ \\
Step 5: Project $U_{F}^{(m)}$ and $U_{L}^{(m)}, m=1,2,..,r$ into convex set $K$. \\ \\
Step 6: For $m=1,2,..,r$ solve $U_{M}^{(m)}$ in parallel from (38).\\ \\
Step 7: Project $U_{M}^{(m)}, m=1,2,..,r$ into convex set $K$.
\subsection{Complexity and speedup analysis of PAQIF}
In this section, we will discuss complexity and speedup analysis of PAQIF method.
\subsubsection{Serial complexity}
The serial count of the above algorithm is defined below.\\
{\bf Factorization count of $W_{0}$ and $Z_{0}$ matrices.} The number of execution steps required to compute the elements of $W_{0}$ and $Z_{0}$ matrices is given by,
\[ T_{\text{fact}}= r(t-1)(1 + 4\beta_{v} + 8\beta^{2}_{v})T_{{add}} + r(t-1)(2 + 8\beta_{v}+8\beta^{2}_{v})T_{\text{multi}} + r(t-1)4\beta_{v}T_{\text{div}}. \]
{\bf Time to calculate $Y$ elements.} The count of execution steps required to compute the elements $Y$ is given by,
\[ T_{\text{Y}}= r(t-1)4\beta_{v}T_{{add}} + r(t-1)4\beta_{v}T_{\text{multi}} \]
{\bf Time to calculate inversion matrices.}The number of execution cycles required to compute the inversions matrices is given by
\[T_{\text{inv}}= r4\beta^{3}_{v}T_{{add}} + r4\beta^{3}_{v}T_{\text{multi}}\]
{\bf Time to calculate accompanied matrices.}The number of execution cycles  required to compute the $\tilde{B}_{1},\tilde{B}_{2},\tilde{C}_{1},\tilde{C}_{2}$ matrices is given by
\[T_{\text{comp}}= r4\beta^{2}_{v}(\beta_{v}-1)T_{{add}} + r4\beta^{3}_{v}T_{\text{multi}}\]
Formation of normal equations requires
\[T_{\text{norm}}=(36r\beta^{3}_{v}-6r\beta^{2})T_{\text{add}}+(36r\beta^{3}_{v}-6r\beta^{2})T_{\text{multi}}.\]
Solution of normal equations by Cholesky factorization requires
\[T_{\text{chol}}=(36r\beta^{3}_{v}-6r\beta^{2}_{v}-10r\beta_{v})T_{\text{add}}+(36r\beta^{3}_{v}-6r\beta^{2}_{v}-10r\beta_{v})T_{\text{multi}}.\]
{\bf Time require to update $Y_{M}$.} For updating $Y_{M}$ requires
\[T_{\text{update}}=(r(t-1)(3+3\beta_{v})-r\beta_{v}(3+3\beta_{v}))T_{\text{add}}+ (r(t-1)(6+3\beta_{v})-r\beta_{v}(6+3\beta_{v}))T_{\text{multi}} \]
\subsubsection{Parallel complexity}
The parallel machine having $r$ processors operation count of the PAQIF algorithm are given below.\\

{\bf Factorization count of $W_{0}$ and $Z_{0}$ matrices.}\\
The number of parallel execution steps required to compute the elements of $W_{0}$ and $Z_{0}$ matrices is given by,
\[ T_{\text{fact}}= ( (t-1)(2 + 8\beta_{v}+8\beta^{2}_{v})T_{\text{op}}.\]
{\bf Time to calculate $Y$ elements.}\\
The count of execution steps required to compute the elements $Y$ is given by,
\[ T_{\text{Y}}= (t-1)4\beta_{v}T_{\text{op}}. \]
{\bf Time to calculate inversion matrices.}\\
The number of execution cycles required to compute the inversions matrices is given by
\[T_{\text{inv}}= 4\beta^{3}_{v}T_{\text{op}}\]
{\bf Time to calculate accompanied matrices.}\\
The number of execution cycles required to compute the $\tilde{B}_{1},\tilde{B}_{2},\tilde{C}_{1},\tilde{C}_{2}$ matrices is given by
\[T_{\text{comp}}= 4\beta^{3}_{v}T_{\text{op}}\]
Formation of normal equations requires
\[T_{\text{norm}}=(36\beta^{3}_{v}-6\beta^{2}_{v})T_{\text{op}}.\]
Solution of normal equations by Cholesky factorization requires
\[T_{\text{chol}}=(36\beta^{3}_{v}-6\beta^{2}_{v}-10\beta_{v})T_{\text{op}}.\]
{\bf Time require to update $Y_{M}$.}\\
For updating $Y_{M}$ requires
\[T_{\text{update}}=(2 + 2\beta_{v}^{2})T_{\text{op}} \]
and its solution requires
\[T_{\text{sol}}= r(t-\beta_{v}-1)T_{\text{op}}. \]
Overall algorithm requires $O(4\beta_{v}^{2}(N/r)+ \beta_{v}(11 + 9r) )$ time steps on an $`r'$ processor machine. Moreover, on a serial machine to solve banded linear system of size $N$ with semibandwidth $\beta_{v}$ requires $O(N\beta_{v}^{2})$ time steps. Consequently, speedup
\[ S_{p}= \frac{1}{4((1/r) + (\beta_{v}/N)(11+9r))}. \]
\subsubsection{Numerical experiment of PAQIF algorithm and its speedup performance}
All numerical computations are performed on Dell Tower precision having processor specification Intel(R) Core(TM) i7-6700 CPU @ 3.40GHz.
\begin{figure}[!htbp]
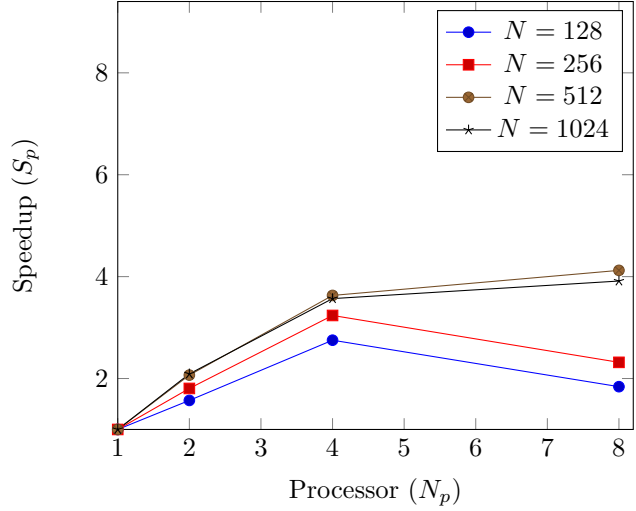

\centering
\tikzpicture
	\axis[
		xlabel=Processor ($N_{p}$),
		ylabel=Speedup ($S_{p}$),
		xmin=1, xmax = 8.2,
		ymin=1, ymax = 9.4,
		]
	\addplot coordinates {
		(1, 1)
		(2, 1.57)
		(4, 2.7521)
		(8, 1.8397)
	};
 	\addplot coordinates {
		(1,  1)
 		(2, 1.805)
 		(4, 3.2377)
		(8, 2.3174)
	};
 	\addplot coordinates {
		(1,  1)
 		(2, 2.067)
 		(4, 3.6307)
		(8, 4.1226)
	};	
	 	\addplot coordinates {
		(1,  1)
 		(2, 2.091)
 		(4, 3.5685)
		(8, 3.9135)
	};	
	
	\legend{$N=128$,$N=256$,$N=512$,$N=1024$}
	\endaxis
\endtikzpicture
\caption{ Speedup  plot for the cases $N=128,256,512,1024$, where bandwidth of matrix $\beta_{v}=2$}
\label{fig:speed}
\end{figure}
\begin{figure}[!htbp]
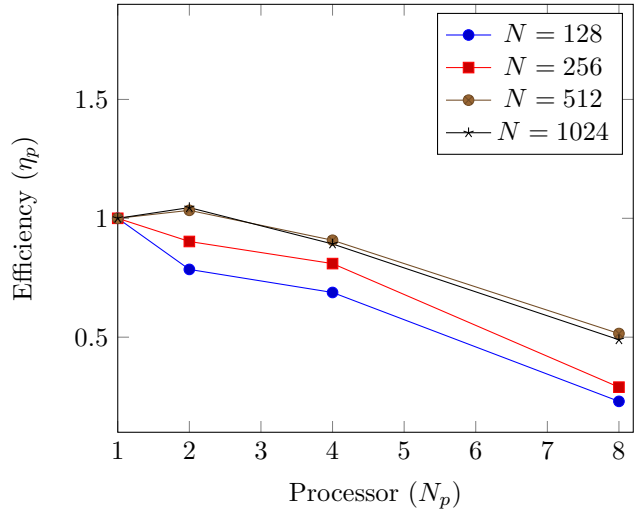

\centering
\tikzpicture
	\axis[
		xlabel=Processor ($N_{p}$),
		ylabel=Efficiency ($\eta_{p}$),
		xmin=1, xmax = 8.2,
		ymin=0.1, ymax = 1.9,
		]
	\addplot coordinates {
		(1, 1)
		(2, 0.785)
		(4, 0.688)
		(8, 0.23)
	};
 	\addplot coordinates {
		(1,  1)
 		(2, 0.9025)
 		(4, 0.8095)
		(8, 0.2897)
	};
 	\addplot coordinates {
		(1,  1)
 		(2, 1.0335)
 		(4, 0.9077)
		(8, 0.5153)
	};	
    \addplot coordinates {
		(1,  1)
 		(2, 1.045)
 		(4, 0.8921)
		(8, 0.4892)
	};
	
	\legend{$N=128$,$N=256$,$N=512$,$N=1024$}
	\endaxis
\endtikzpicture
\caption{ Efficiency plot for the cases $N=128,256,512,1024$, where bandwidth of matrix $\beta_{v}=2$}
\label{fig:eff}
\end{figure} 

\begin{table}[!htbp]
\caption{PAQIF result from 4 processors and for matrix order $64 \times 64 $
bandwidth $\beta=2$.}
\begin{center}
\begin{tabular}{cccc}
\hline
MYID & CPU-time in Second) & CPU-time in Hours) \\
\hline
$1$       & $ 3.0376911163330078 \times 10^{-3}$ & $8.4380308787027993 \times 10^{-7}$\\ 
$2$       & $3.0376911163330078 \times 10^{-3} $ & $8.4380308787027993 \times 10^{-7}$ \\ 
$3$       & $3.0376911163330078\times 10^{-3}$   & $8.4380308787027993 \times 10^{-7}$ \\ 
$0$       & $3.2024383544921875\times 10^{-3}$   & $8.8956620958116323 \times 10^{-7}$ \\
\hline 
 \end{tabular}
 \end{center}
\end{table}
 \begin{table}[!htbp]
 \caption{PAQIF result from 4 processors and for matrix order $128 \times 128 $
 bandwidth $\beta=2$.}
\begin{center}
\begin{tabular}{cccc}
\hline
MYID & CPU-time in Second) & CPU-time in Hours) \\
\hline
$1$       & $ 9.1495513916015625 \times 10^{-3}$ & $2.5415420532226563 \times 10^{-6}$\\ 
$2$       & $9.1459751129150391 \times 10^{-3} $ & $ 2.5405486424763998 \times 10^{-6}$ \\ 
$3$       & $9.1507434844970703\times 10^{-3}$   & $2.5418731901380750 \times 10^{-6}$ \\ 
$0$       & $9.3822479248046875\times 10^{-3}$   & $2.6061799791124132 \times 10^{-6}$ \\
\hline 
 \end{tabular}
 \end{center}
\end{table}
\begin{table}[!htbp]
 \caption{PAQIF result from 4 processors and for matrix order $256 \times 256 $
 bandwidth $\beta=2$.}
\begin{center}
\begin{tabular}{cccc}
\hline
MYID & CPU-time in Sec.) & CPU-time in Hours) \\
\hline
$1$       & $2.2377967834472656\times 10^{-3}$ & $6.2161021762424042 \times 10^{-7}$\\ 
$2$       & $2.2346973419189453\times 10^{-3}$ & $  6.2074926164415153 \times 10^{-7}$ \\ 
$3$       & $1.9750595092773438\times 10^{-3}$   & $5.4862764146592882 \times 10^{-7}$ \\ 
$0$       & $2.5713443756103516\times 10^{-3}$   & $7.1426232655843094 \times 10^{-7}$ \\
\hline 
 \end{tabular}
 \end{center}
\end{table}
\begin{table}[!htbp]
 \caption{PAQIF result from 8 processors and for matrix order $256 \times 256 $
 bandwidth $\beta=2$.}
\begin{center}
\begin{tabular}{cccc}
\hline
MYID & CPU-time in Sec.) & CPU-time in Hours) \\
\hline
 $1$       & $3.9653778076171875\times 10^{-3}$ & $1.1014938354492188 \times 10^{-6}$\\ 
$2$       & $3.9479732513427734\times 10^{-3} $ & $  1.0966592364841036 \times 10^{-6}$ \\ 
$3$       & $3.9269924163818359\times10^{-3}$   & $1.0908312267727323 \times 10^{-6}$ \\ 
$0$       & $4.7206878662109375 \times 10^{-3}$ & $1.3113021850585938 \times 10^{-6}$ \\
$4$       & $3.9708614349365234\times 10^{-3}$   & $1.1030170652601455\times 10^{-6}$ \\
$5$       & $3.9658546447753906\times 10^{-3}$   & $1.1016262902153863 \times 10^{-6}$ \\
$6$       & $3.9696693420410156\times 10^{-3}$   & $1.1026859283447266 \times 10^{-6}$ \\
$7$       & $3.9660930633544922\times 10^{-3}$   & $1.1016925175984701 \times 10^{-6}$ \\
\hline 
 \end{tabular}
 \end{center}
\end{table}
\section{Applications of PAQIF algorithm}
The PAQIF algorithm is an important solver for solving wider class of problems if we do a careful treatment to the general linear and nonlinear elliptic as well as parabolic type PDEs. Howerver, in this article, we restrict our attention in solving the problems related to variational inequality and its application to free boundary problems (in particular, in solving EHL problems).
\subsection{Linear study for convection-diffusion problem}\label{sec:three}
Our specific interest in this subsection is to develop an robust splitting for our EHL model. we consider well known convection-diffusion problem in 1-d and 2-d case as
\begin{example}\label{ex:zero}
 \begin{align}
 \label{eqn39}
   L u = (a(x) u)_{x}-\epsilon u_{xx} = f(x) \quad \forall x \in \Omega \nonumber \\
  u(x) = g(x) \quad \forall x \in \partial \Omega,
  \end{align}
 \end{example}
 and
\begin{example}\label{ex:one}
 \begin{align}
 \label{eqn39}
   L u = (a(x,y) u)_{x}+(b(x,y) u)_{y}-\epsilon \Delta u = f(x,y) \quad \forall (x,y) \in \Omega \nonumber \\
  u(x,y) = g(x,y) \quad \forall (x,y) \in \partial \Omega,
  \end{align}
 \end{example}
where $0 < \epsilon < < 1$.
Then discretization of convective term for $(a u)_{x}$ is performed as
 \begin{align*}
  (a u)_{x}=\frac{a}{h}(u_{i}-u_{i-1})=: L_{1}^{1} 
 \end{align*}
 \begin{align}
 \label{eqn40}
  (a u)_{x}=\frac{a}{h}(u_{i,j}-u_{i-1,j})=: L_{1}^{2} 
 \end{align}
However, this scheme is only $O(h)$ accurate. Our interest here to increase accuracy at least smooth part without contaminating
any wiggle in solution. Consider the Van Leer's $\kappa$-schemes \cite{vanleer} for discretization
term  $(a u)_{x}$ (for $a = \text{const} > 0$) as
\begin{gather*}
  (a u)_{x}=\frac{a}{h}[(u_{i}-u_{i-1})-\frac{\kappa}{2}(u_{i}-u_{i-1})+\frac{1-\kappa}{4}(u_{i}-u_{i-1})
 +\frac{1+\kappa}{4}(u_{i+1}-
 u_{i})-\frac{1-\kappa}{4}(u_{i}-u_{i-2})] \\
  =L_{1}^{1} + L_{\alpha}^{1}+L_{\beta}^{1}+L_{\gamma}^{1}+L_{\delta}^{1}
\end{gather*}
\begin{gather}
\label{eqn41}
  (a u)_{x}=\frac{a}{h}[(u_{i,j}-u_{i-1,j})-\frac{\kappa}{2}(u_{i,j}-u_{i-1,j})+\frac{1-\kappa}{4}(u_{i,j}-u_{i-1,j})\nonumber\\
 +\frac{1+\kappa}{4}(u_{i+1,j}-
 u_{i,j})-\frac{1-\kappa}{4}(u_{i,j}-u_{i-2,j})]\nonumber\\
  =L_{1}^{2} + L_{\alpha}^{2}+L_{\beta}^{2}+L_{\gamma}^{2}+L_{\delta}^{2}
\end{gather}
(similar scheme can be constructed for $a < 0$).
The resulting discrete model Example.~\ref{ex:one} by $\kappa$-scheme (take $\kappa = 0$ here) is denoted by
\begin{align}
\label{eqn42}
[L_{\kappa=0}^{1}]=\frac{a}{h}\left[\begin{matrix}
                      0.25 \ &  -1.25  &  0.75  & 0.25 & 0 \\
                     \end{matrix}
\right]
+
\frac{\epsilon}{h^2}
\left[\begin{matrix}
 -1 \ & \ 2 \ &\  -1 \\
    \end{matrix} 
\right]
\end{align}
\begin{align}
\label{eqn42}
[L_{\kappa=0}^{2}]=\frac{a}{h}\left[\begin{matrix}
                      0.25 \ &  -1.25  & 0.75  & 0.25 & 0 \\
                     \end{matrix}
\right]+\frac{b}{h}
\left[\begin{matrix}
0  \\
0.25 \\
0.75  \\
-1.25 \\
0.25 \\
    \end{matrix} 
\right]
+
\frac{\epsilon}{h^2}
\left[\begin{matrix}
 0 \ & \ -1 \ &\  0 \\
 -1 \ & \ 4 \ &\ -1 \\
   0 \ &\ -1 \ &\ 0 \\
    \end{matrix} 
\right]
\end{align}
In general, above discrete equation.~\ref{eqn39} do not produces $M$-matrix and many iterative splitting on $L_{\kappa}$ diverge.
Therefore, this problem is solved using TVD scheme with help of appropriate flux limiters to prevent a solution from unwanted oscillation.
Now consider $\kappa=-1$ then the second-order upwind scheme looks like ($a > 0$) for one dimensional case
\begin{gather*}
 (au)_{x}=
 \frac{a}{h}[(u_{i}-u_{i-1})+\frac{1}{2}(u_{i}-u_{i-1})
 +\frac{1}{2}(u_{i}-u_{i-1})-\frac{1}{2}(u_{i-1}-u_{i-2})]
 =L_{1}^{1}+L_{\alpha}^{1}+L_{\gamma}^{1}+L_{\delta}^{1}.
\end{gather*}
and for two-dimensional case
\begin{gather}
 (au)_{x}=
 \frac{a}{h}[(u_{i,j}-u_{i-1,j})+\frac{1}{2}(u_{i,j}-u_{i-1,j})
 +\frac{1}{2}(u_{i,j}-u_{i-1,j})-\frac{1}{2}(u_{i-1,j}-u_{i-2,j})]\nonumber\\ \label{eqnn43}
 =L_{1}^{2}+L_{\alpha}^{2}+L_{\gamma}^{2}+L_{\delta}^{2}.
\end{gather}
We enforce Eqn.~\ref{eqnn43} to satisfy TVD condition by multiply limiter functions in the additional terms $L_{\alpha}, L_{\gamma} $ and $L_{\delta}$.
Then following two type of discretization for convection term are presented here as
\begin{gather*}
 (au)_{x}=\frac{a}{h}[(u_{i}-u_{i-1})+\frac{1}{2}\phi(r_{i-1/2})(u_{i}-u_{i-1})
 -\frac{1}{2}\phi(r_{i-3/2})(u_{i-1}-u_{i-2})]
 =L_{1}^{1}+L_{\alpha}^{1}+L_{\gamma}^{1}
\end{gather*}
and
\begin{gather*}
 (au)_{x}=\frac{a}{h}[(u_{i}-u_{i-1})+\frac{1}{2}\phi(r_{i-1/2})(u_{i}-u_{i-1})
 +\frac{1}{2}\phi(r_{i-3/2})(u_{i}-u_{i-1})
 -\frac{1}{2}\phi(r_{i-3/2})(u_{i-1}-u_{i-2})] \\
 =L_{1}^{1}+L_{\alpha}^{1}+L_{\beta}^{1}+L_{\gamma}^{1},
\end{gather*}
where $r_{i-1/2}=\dfrac{(u_{i+1}-u_{i})}{(u_{i}-u_{i-1})}$ and 
$r_{i-3/2}=\dfrac{(u_{i}-u_{i-1})}{(u_{i-1}-u_{i-2})}$.\\
\begin{gather}
 (au)_{x}=\frac{a}{h}[(u_{i,j}-u_{i-1,j})+\frac{1}{2}\phi(r_{i-1/2})(u_{i,j}-u_{i-1,j})\nonumber 
 -\frac{1}{2}\phi(r_{i-3/2})(u_{i-1,j}-u_{i-2,j})]\\ \label{eqn44}
 =L_{1}^{2}+L_{\alpha}^{2}+L_{\gamma}^{2}
\end{gather}
and
\begin{gather}
 (au)_{x}=\nonumber \\
 \frac{a}{h}[(u_{i,j}-u_{i-1,j})+\frac{1}{2}\phi(r_{i-1/2})(u_{i,j}-u_{i-1,j})
 +\frac{1}{2}\phi(r_{i-3/2})(u_{i,j}-u_{i-1,j})
 -\frac{1}{2}\phi(r_{i-3/2})(u_{i-1,j}-u_{i-2,j})]\nonumber \\\label{eqn45}
 =L_{1}^{2}+L_{\alpha}^{2}+L_{\beta}^{2}+L_{\gamma}^{2},
\end{gather}
where $r_{i-1/2}=\dfrac{(u_{i+1,j}-u_{i,j})}{(u_{i,j}-u_{i-1,j})}$ and 
$r_{i-3/2}=\dfrac{(u_{i,j}-u_{i-1,j})}{(u_{i-1,j}-u_{i-2,j})}$.\\
In \cite{Oosterlee,peeyush} represents graph of limiter function $(r,\phi(r))$ on which the resulting convection discretization term
defined in Eqn.~\ref{eqnn43} and Eqn.~\ref{eqn44} enforce to be TVD and higher order accurate.
The discrete representation of Example~\ref{ex:one} using Van-Leer
$\kappa$-scheme in 1-d and 2-d case are defined as 
\begin{align}
\label{eqn46}
 L_{\kappa}^{1}u := \sum_{l_{x} \in \mathcal{I}}
 \mathcal{C}^{(\kappa)}_{l_{x}}u_{i+l_{x}}.
\end{align}
and
\begin{align}
\label{eqn46}
 L_{\kappa}^{2}u := \sum_{l_{x} \in \mathcal{I}}\sum_{l_{y}
 \in \mathcal{I}}\mathcal{C}^{(\kappa)}_{l_{x}l_{y}}u_{i+l_{x},j+l_{y}}.
\end{align}
Moreover, in stencil notation these are represented as
\begin{align}
\label{eqn47}
L_{\kappa}^{1} := 
 \begin{pmatrix}
\mathcal{C}_{-20}^{\kappa} & \mathcal{C}_{-10}^{\kappa} & \mathcal{C}_{00}^{\kappa} & \mathcal{C}_{10}^{\kappa} & \mathcal{C}_{20}^{\kappa}
 \end{pmatrix}
 \end{align} 
 and
\begin{align}
\label{eqn47}
L_{\kappa}^{2} := 
 \begin{pmatrix}
        &         & \mathcal{C}_{02}^{\kappa} &        &        \\
        &         & \mathcal{C}_{01}^{\kappa} &         &         \\
\mathcal{C}_{-20}^{\kappa} & \mathcal{C}_{-10}^{\kappa} & \mathcal{C}_{00}^{\kappa} & \mathcal{C}_{10}^{\kappa} & \mathcal{C}_{20}^{\kappa} \\
        &         & \mathcal{C}_{0-1}^{\kappa} &        &         \\
        &         & \mathcal{C}_{0-2}^{\kappa} &        &              
 \end{pmatrix}
.
\end{align} 
Then the discrete matrix equation $L_{\kappa}^{i}u =f^{i},i=1,2$ are solved efficiently by the use of AQIF method. The related splitting is constructed by 
taking the matrix operator defined in Eqn.~\ref{eqn47}. In particular case, the splitting in $x$-direction is scanned as forward 
(or backward direction depending on flow direction) lexicographical order and it is represented as
$S_{\kappa}=S_{\kappa}^{x_{f}}$ (or $S_{\kappa}^{x_{b}}$). For matrix operator $L_{\kappa}$, the forward splitting
$S_{\kappa}^{x_{f}}$ is defined as 
\begin{gather*}
 L_{\kappa}=L^{x}_{\kappa/2}-(L^{x}_{\kappa/2}-L_{\kappa})=:L^{+}_{\kappa}+L^{0}_{\kappa}+L^{-}_{\kappa},
\end{gather*}
where
\begin{gather*}
{\scriptstyle 
L^{x}_{\kappa/2}:=L^{+}_{\kappa}+L^{0}_{\kappa}=}
{\scriptstyle \begin{pmatrix}
        &         & 0 &        &        \\
        &         & 0 &         &         \\
      0 &       0 & 0 & 0       & 0        \\
        &         & \mathcal{C}_{0-1}^{\kappa} &        &         \\
        &         & \mathcal{C}_{0-2}^{\kappa} &        &              
 \end{pmatrix}}
 + 
 {\scriptstyle \begin{pmatrix}
        &         & 0 &        &        \\
        &         & 0 &         &         \\
0       & \mathcal{C}_{-10}^{\kappa/2} & \mathcal{C}_{00}^{\kappa/2} & \mathcal{C}_{10}^{\kappa/2} & 0 \\
        &         & 0 &        &         \\
        &         & 0 &        &              
 \end{pmatrix} 
 }
\end{gather*}
and therefore overall splitting is
\begin{gather*}
L^{x}_{\kappa/2}u^{n+1}= (L^{x}_{\kappa/2}-L_{\kappa})u^{n}+f.
\end{gather*}
Now for a fixed $x$-line ($m$-grid points in $X$-direction) $$(i,j_{0})_{( 1\le i \le m)}$$, we have the following
\begin{gather*}
 L^{0}_{\kappa}u^{*}=f+L^{0}_{\kappa}u^{n}-(L^{-}_{\kappa}+L^{0}_{\kappa})u^{n}-L^{+}_{\kappa}u^{n+1}.
\end{gather*}
$L^{0}_{\kappa}$ corresponds the operator to the unknowns $u^{*}$ which are scanned simultaneously. $L^{-}_{\kappa}$ corresponds
the operator to the old approximation $u^{n}$, and $L^{+}_{\kappa}$ operator having updated values of $u^{n+1}$.
Now by applying under-relaxation constant $\omega$ in above equation we have
\begin{align*}
u^{n+1}=u^{*}\omega+u^{n}(1-\omega),
\end{align*}
therfore, splitting equation can be rewritten in corresponding change, $\sigma^{n+1}=u^{n+1}-u^{n}$ form as
\begin{align*}
 L^{0}_{\kappa}\sigma^{n+1}=f-(L^{-}_{\kappa}+L^{0}_{\kappa})u^{n}-L^{+}_{\kappa}u^{n+1},\\
 u^{n+1}=u^{n}+\sigma^{n+1}\omega
\end{align*}
Now we construct series of splitting for solving Eqn.~\ref{eqn39} as below.\\
\underline{\bf Splitting : $L_{s0}$}
This splitting is constructed by taking upwind operator $L_{1}$ plus a ``positive" part of the second-order 
operators $L_{\alpha}$ and $L_{\beta}$ from Eqn.~\ref{eqn45} and part of diffusion operator from Eqn.~\ref{eqn47}.
\begin{gather}
\label{eqn48}
L_{\kappa}^{0}u=-\Big\{\frac{\epsilon}{h^2}+\frac{a}{4h}(5-3\kappa)\Big\}u_{i-1,j}+\Big\{\frac{a}{h}\Big(\frac{2-\kappa}{2}
+\frac{1-\kappa}{4}\Big)
+\frac{4\epsilon}{h^2}\Big\}u_{i,j} + \Big\{-\frac{\epsilon}{h^{2}}\Big\}u_{i+1,j}\nonumber\\
L_{\kappa}^{+}u=\Big\{-\frac{\epsilon}{h^{2}}\Big\}u_{i,j-1} \nonumber\\
L_{\kappa}^{-}u=\Big\{\frac{a}{h}\Big(\frac{1-\kappa}{4}\Big)\Big\}u_{i-2,j}+\Big\{\frac{a}{h}\Big(\frac{1-\kappa}{4}\Big)
 \Big\}u_{i-1,j}+ \nonumber \\
\Big\{-\frac{a}{h}\Big(\frac{1+\kappa}{4}\Big)\Big\}u_{i,j}
 +\Big\{\frac{a}{h}\Big(\frac{1+\kappa}{4}\Big)\Big\}u_{i+1,j}+\Big\{-\frac{\epsilon}{h^{2}}\Big\}u_{i,j+1}.
\end{gather}
\underline{\bf Splitting : $Ls1$}
This splitting is constructed taking upwind operator $L_{1}$ plus a ``positive" part of the second-order 
operators $L_{\alpha}$ from Eqn.~\ref{eqn44} and part of diffusion operator from Eqn.~\ref{eqn47}.
\begin{gather}
\label{eqn49}
L_{\kappa}^{0}u=\Big\{-\frac{a}{h}\Big(\frac{2-\kappa}{2}\Big)-\frac{\epsilon}{h^2}\Big\}u_{i-1,j}+\Big\{\frac{a}{h}\Big(\frac{2-\kappa}{2}\Big)
 +\frac{4\epsilon}{h^2}\Big\}u_{i,j}+\Big\{-\frac{\epsilon}{h^{2}}\Big\}u_{i+1,j} \nonumber \\
L_{\kappa}^{+}u=\Big\{-\frac{\epsilon}{h^{2}}\Big\}u_{i,j-1}\nonumber\\
L_{\kappa}^{-}u=\Big\{\frac{a}{h}\Big(\frac{1-\kappa}{4}\Big)\Big\}u_{i-2,j}+\Big\{\frac{a}{h}\Big(\frac{1-\kappa}{4}\Big)
 \Big\}u_{i-1,j}+\Big\{-\frac{a}{h}\Big(\frac{1+\kappa}{4}\Big)\Big\}u_{i,j} \nonumber\\
  +\Big\{\frac{a}{h}\Big(\frac{1+\kappa}{4}\Big)\Big\}u_{i+1,j}+\Big\{-\frac{\epsilon}{h^{2}}\Big\}u_{i,j+1}
\end{gather}
\underline{\bf Splitting : ${Ls2}$}
In this case splitting coefficients $\mathcal{C}_{**}^{\kappa}$ correspond only to the first-order upwind operator $L_{1}$ of a 
discretized Eqn.~\ref{eqn44} plus diffusion operator.
\begin{gather}
\label{eqn50}
{L_{\kappa}^{0}u=\Big\{-\frac{a}{h}-\frac{\epsilon}{h^{2}}\Big\}u_{i-1,j}+\Big\{\frac{a}{h}
 +\frac{4\epsilon}{h^2}\Big\}u_{i,j}+\Big\{-\frac{\epsilon}{h^{2}}\Big\}u_{i+1,j} }\nonumber \\
 {L_{\kappa}^{+}u=\Big\{-\frac{\epsilon}{h^{2}}\Big\}u_{i,j-1}}\nonumber\\
 { L_{\kappa}^{-}u=\Big\{\frac{a}{h}\Big(\frac{1-\kappa}{4}\Big)\Big\}u_{i-2,j}+\Big\{-\frac{a}{h}\Big(\frac{1-3\kappa}{4}\Big)
 \Big\}u_{i-1,j}+\Big\{-\frac{a}{h}\Big(\frac{1+3\kappa}{4}\Big)\Big\}u_{i,j}} \nonumber\\
 { +\Big\{\frac{a}{h}\Big(\frac{1+\kappa}{4}\Big)\Big\}u_{i+1,j}+\Big\{-\frac{\epsilon}{h^{2}}\Big\}u_{i,j+1}}
\end{gather}
\underline{\bf Splitting : $Ls3$}
The third splitting named as $\kappa$- distributive line relaxation is constructed by assuming a ghost variable $\sigma_{*}$ (with the 
same cardinality as $\sigma$) such that $\sigma = \mathcal{D}\sigma_{*}$, where matrix $\mathcal{D}$ comes due to distributive change of the relaxation in other way we construct line-wise distributive splitting as
\begin{gather}
\label{eqn51}
 u_{i,j}^{n+1}=u^{n}_{i,j}+\sigma_{i,j}-\frac{(\sigma_{i+1,j}+\sigma_{i-1,j}+\sigma_{i,j+1}+\sigma_{i,j-1})}{4}
\end{gather}
This splitting is understood in the following way:
First, discretize Example~\ref{ex:one} by $\kappa$-scheme and get the equation of the form as
\begin{gather*}
L^{x}_{\kappa/2}u^{n+1}= f', \quad \text{where }  f'=(L^{x}_{\kappa/2}-L_{\kappa})u^{n}+f.
\end{gather*}
Now in the above splitting equation put the value of $u^{n+1}$ from Eqn.~\ref{eqn51} and apply distributive splitting in the form of right preconditioner defined below.
\begin{gather*}
L^{x}_{\kappa/2}\sigma^{n+1}= R^{n}\quad \text{and } L^{x}_{\kappa/2}\mathcal{D}\sigma^{n+1}_{*}= R^{n},
\end{gather*}
where the updated change in pressure and residual equation are denoted as
\begin{align*}
 \sigma^{n+1}=\mathcal{D}\sigma^{n+1}_{*} \text{ and } R^{n}=L^{x}_{\kappa/2}u^{n+1}- f'
\end{align*}
respectively.
In other way, line distributive splitting consists of following two steps;
In first step it calculates new ghost value approximation change $\sigma^{n+1}_{*}$. Second step calculates new approximation
change $\sigma^{n+1}$.\\
Now applying above splitting along the $x$-direction in Example~\ref{ex:one}, the diffusive term is computed as
\begin{gather}
\label{eqn52}
{ -\epsilon \Big[\Big\{ u_{i+1,j}+\sigma_{i+1}-\frac{(\sigma_{i} + \sigma_{i+2})}{4} \Big\}
 -\Big\{ u_{i,j}+\sigma_{i}-\frac{(\sigma_{i-1} + \sigma_{i+1})}{4} \Big\}\Big]\Big/h^{2} }\nonumber\\
{  -\epsilon \Big[\Big\{ u_{i-1,j}+\sigma_{i-1}-\frac{(\sigma_{i-2} + \sigma_{i})}{4} \Big\}
 -\Big\{ u_{i,j}+\sigma_{i}-\frac{(\sigma_{i-1} + \sigma_{i+1})}{4} \Big\}\Big]\Big/h^{2} }\nonumber\\
{  -\epsilon \Big[\Big\{ u_{i,j+1}-\frac{\sigma_{i}}{4} \Big\}
 -\Big\{ u_{i,j}+\sigma_{i}-\frac{(\sigma_{i-1} + \sigma_{i+1})}{4} \Big\}\Big]\Big/h^{2} }\nonumber\\
{  -\epsilon \Big[\Big\{ u_{i,j-1}-\frac{\sigma_{i}}{4} \Big\}
 -\Big\{ u_{i,j}+\sigma_{i}-\frac{(\sigma_{i-1} + \sigma_{i+1})}{4} \Big\}\Big]\Big/h^{2}}.
\end{gather}
and convection term is computed as
\begin{gather}
\label{eqn53}
+\Big[\frac{a_{i+1/2,j}(2+\kappa)}{2h}\Big\{ u_{i,j}+\sigma_{i}-\frac{(\sigma_{i-1} + \sigma_{i+1})}{4} \Big\}\nonumber \\
 -\frac{a_{i-1/2,j}(2+\kappa)}{2h}\Big\{ u_{i-1,j}+\sigma_{i-1}-\frac{(\sigma_{i-2} + \sigma_{i})}{4} \Big\}\Big]
\end{gather}
Other part of convective term which comes from Van-leer discretization do not contain any distributive term as above explained and kept in right hand side during relaxation and overall splitting is written as follows
\begin{gather}
{ \Big(\frac{\epsilon}{4h^{2}}+\frac{a_{i-1/2,j}(2+\kappa)}{8h}\Big)\sigma_{i-2}
-\Big(\frac{7\epsilon}{4h^{2}}+\frac{a_{i+1/2,j}(2+\kappa)}{2h}+
\frac{a_{i-1/2,j}(2+\kappa)}{8h}\Big)\sigma_{i-1} }\nonumber \\
{ +\Big(\frac{20\epsilon}{4h^{2}}+\frac{a_{i+1/2,j}(2+\kappa)}{2h}
+\frac{a_{i-1/2,j}(2+\kappa)}{8h}\Big)\sigma_{i} }\nonumber \\ 
{ -\Big(\frac{8\epsilon}{4h^{2}}+\frac{a_{i+1/2,j}(2+\kappa)}{2h}\Big)\sigma_{i+1}
+\frac{\epsilon}{4h^{2}}\sigma_{i+2} } \nonumber \\ \label{eqn54}
{ =R_{i,j}+\Big\{\frac{1+\kappa}{4}(u_{i+1,j}-u_{i,j})-\frac{1-\kappa}{4}(u_{i-1,j}-u_{i-2,j})\Big\}\Big]}
\end{gather}
after solving above equation for $\sigma$ along $x$ line direction updated solution $u^{n+1}$ is evaluated as 
\begin{gather*}
 u_{i,j}^{n+1}=u^{n}_{i,j}+\sigma_{i,j}-\frac{(\sigma_{i+1,j}+\sigma_{i-1,j}+\sigma_{i,j+1}+\sigma_{i,j-1})}{4}.
\end{gather*}
However, above splitting $Ls3$ Eqn.~\ref{eqn54} is not robust and very rarely use in practice.\\
\subsection{Application in solving Variational inequality and LCP}
In the last section, we have shown a series of splittings for solving 
convection-diffusion type problems. This idea can be generalizing for more
general variational inequality and LCP related applications. In this section, we will discuss convergence criterion for solving PAQIF algorithm for general variational inequality and LCP problems.\\
Let us consider domain $\Omega \in \mathbb{R}^2$ with boundary $\partial \Omega$, and consider known functions $f$ and $g$. 
Then find $u$ in a weak sense such that these inequalities hold
\begin{example}
  \begin{align*}
  -(a(x,y)u)_{x}+\epsilon \Delta u \le f(x,y) \quad \forall x,y \in \Omega \nonumber\\
   u(x,y) \ge 0 \quad \forall x,y \in \Omega,\\
   u(x,y)[(a(x,y)u)_{x}-\epsilon \Delta u - f(x,y)]=0  \quad \forall x,y \in \Omega,\\
  u(x,y) = g(x,y) \quad \forall x,y \in \partial \Omega.
 \end{align*}
\end{example}

\begin{example}
 \begin{align*}
  -(a(x,y)u)_{x}+\epsilon \Delta u \le f(x,y) \quad \forall x,y \in \Omega \nonumber\\
   u(x,y) \ge 0 \quad \forall x,y \in \Omega,\\
   u(x,y)[(a(x,y)u)_{x}-\epsilon \Delta u - f(x,y)]=0  \quad \forall x,y \in \Omega,\\
  u(x,y) = g(x,y) \quad \forall x,y \in \partial \Omega.
 \end{align*}
\end{example} 
Therefore, discrete version of above problem (finite difference or finite volume) is written in the matrix form
 \begin{align}
 \label{eqnnew1}
  Lu \le f, \nonumber \\
   u \ge 0,\nonumber \\
   u[L u - f]=0,
  \end{align}
where $L$ is a $M$-matrix of order $m\times m$, $u$ and $f$ are $m\times 1$-column vector.   
It is well known that solving above discrete problem is equivalent to solving quadratic
minimization problem of the form
\begin{align} 
\label{eqnnew2}
 G(u)=\frac{1}{2}u^{T}Lu-f^{T}u, \nonumber \\
 \min_{u \in \mathbb{R}^m\times1} G(u),
\end{align}
subjected to the constraints
\begin{align*}u\ge0.\end{align*}
\begin{theorem}\label{thrm:lcp}
 Let $u^{n}$ and $f^{n}$ are $m\times 1$-column vectors achieved by splitting algorithm (*),
 \begin{align*}
 L^{0}_{\kappa}\sigma^{n+1}=f-(L^{-}_{\kappa}+L^{0}_{\kappa})u^{n}-L^{+}_{\kappa}u^{n+1},\\
 \sigma^{n+1}=\max\{0,\sigma^{n+1}\},\\
 u^{n+1}=u^{n}+\sigma^{n+1}\omega,
\end{align*}
where $0 < \omega < 1$ then we have $u^{n} \rightarrow u$ and 
$f^{n} \rightarrow f$ such that $u$ and $f$ is a solution of LCP problem.
\end{theorem}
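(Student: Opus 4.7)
The plan is to recognize the iteration as a projected matrix splitting method for the LCP \eqref{eqnnew1} and to establish convergence by showing monotone decrease of the associated energy functional $G(u)=\tfrac{1}{2}u^{T}Lu-f^{T}u$ from \eqref{eqnnew2} along the iterates, then identifying limit points as KKT solutions. Because $L$ is an $M$-matrix, I would first verify that the diagonal block $L^{0}_{\kappa}$ inherits the $M$-matrix property (or at least nonsingularity) from the splitting $L=L^{-}_{\kappa}+L^{0}_{\kappa}+L^{+}_{\kappa}$, so that $\sigma^{n+1}$ is uniquely defined at every step. Next, I would rewrite the three-line update as a single fixed-point map $u^{n+1}=T_{\omega}(u^{n})$, where $T_{\omega}$ composes the linear solve with the projection onto the nonnegative orthant $K$, and observe that a fixed point of $T_{\omega}$ satisfies $u\ge 0$, $Lu\ge f$ componentwise, and $u^{T}(Lu-f)=0$, which is precisely \eqref{eqnnew1}.

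The core technical step is a sufficient-descent inequality of the form $G(u^{n+1})\le G(u^{n})-c\|u^{n+1}-u^{n}\|^{2}$ for some $c=c(\omega)>0$. In the unconstrained smooth setting, this is the classical Ostrowski--Reich bound, which holds provided $\tfrac{1}{\omega}L^{0}_{\kappa}+\tfrac{1}{\omega}(L^{0}_{\kappa})^{T}-L$ is positive definite; under the $M$-matrix assumption, the restriction $0<\omega<1$ is exactly what makes this quadratic form coercive, and the under-relaxation introduces the slack needed to absorb the off-diagonal pieces $L^{\pm}_{\kappa}$. The projection step only tightens the descent because the projection onto the closed convex cone $K$ is firmly nonexpansive, so $\|P_{K}(v)-P_{K}(w)\|\le\|v-w\|$ translates the linear descent directly into descent for the projected iterate. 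Combining this with coercivity of $G$ on $K$ (from the M-matrix, hence positive-definite-like, structure) gives boundedness of $\{u^{n}\}$ and summability of $\|u^{n+1}-u^{n}\|^{2}$, hence $u^{n+1}-u^{n}\to 0$.

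To pass to the limit, I would extract a subsequence $u^{n_{k}}\to u^{*}$ by boundedness, use continuity of the splitting matrices and of $P_{K}$ to show $u^{*}$ is a fixed point of $T_{\omega}$, and conclude $u^{*}$ solves the LCP. Uniqueness of the LCP solution under the $M$-matrix hypothesis (via the equivalence with minimizing the strictly convex $G$ over $K$) then promotes subsequential convergence to full convergence $u^{n}\to u^{*}$. The corresponding convergence $f^{n}\to f$ follows by defining $f^{n}$ as the natural residual-slack vector produced by the splitting and passing to the limit in the componentwise relations.

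The main obstacle I anticipate is the interplay between the projection being applied to the \emph{increment} $\sigma^{n+1}$ rather than to $u^{n+1}$ itself, because projecting the increment does not manifestly preserve feasibility $u^{n+1}\ge 0$ and is not obviously a descent step for $G$. I plan to resolve this by reinterpreting the step $u^{n+1}=u^{n}+\omega\max\{0,\sigma^{n+1}\}$ as a single projection of the unrelaxed predictor onto the shifted cone $\{v:v\ge u^{n}\}$ (scaled by $\omega$), which then fits within the standard projected-splitting framework of Mangasarian and Cottle--Pang--Stone; alternatively, an inductive argument starting from a feasible $u^{0}\ge 0$ and using the M-matrix monotonicity of $L^{0}_{\kappa}$ shows the two interpretations agree on the iterate sequence. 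The second technical difficulty, secondary but not trivial, is that $L$ here may fail to be symmetric (convection-dominated regime), so the descent argument must be run on the symmetrized energy $\tfrac{1}{2}u^{T}(L+L^{T})u-f^{T}u$ plus a controllable skew-symmetric remainder, with $\omega<1$ chosen small enough to dominate that remainder.
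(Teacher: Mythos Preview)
The paper does not give a self-contained proof of this theorem: its entire argument is the single sentence ``For the proof of this theorem we refer to see Cryer \cite{Cryer}.'' Your plan is essentially a reconstruction of Cryer's projected-SOR convergence proof (energy descent for $G$, Ostrowski--Reich positivity of $\tfrac{2}{\omega}D-L$, boundedness, subsequential limits satisfying KKT), so in that sense you and the paper are pointing at the same argument.

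Where your proposal goes beyond the paper is precisely in the two obstacles you flag, and you are right to flag them. Cryer's 1971 theorem assumes $L$ symmetric positive definite and applies the projection \emph{componentwise to the iterate} after each Gauss--Seidel update, i.e.\ $u_i^{n+1}=\max\{0,u_i^{n}+\omega\sigma_i\}$. The iteration stated here instead truncates the \emph{increment}, $\sigma^{n+1}\leftarrow\max\{0,\sigma^{n+1}\}$, and then sets $u^{n+1}=u^{n}+\omega\sigma^{n+1}$. These two updates coincide only on the set where $u^{n}=0$; in general the increment-projection neither enforces $u^{n+1}\ge 0$ nor matches Cryer's descent calculation. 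Your proposed fix (reinterpreting the step as projection onto the shifted cone $\{v\ge u^{n}\}$) changes the feasible set at every step and so does not land back in the standard LCP framework; the inductive ``the two interpretations agree'' claim would need an explicit argument that I do not see going through. Likewise, the non-symmetric convection-dominated $L$ is outside Cryer's hypotheses, and your remedy of working with the symmetrized quadratic plus a skew remainder controlled by taking $\omega$ small is plausible but is a genuine extension, not a corollary.

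In short: your outline matches what the paper invokes, and the difficulties you identify are real gaps that the paper's bare citation does not resolve either. If you want a clean proof, the safest route is to replace the increment projection by the iterate projection (so that the algorithm literally is projected SOR), quote Cryer or Mangasarian for the symmetric case, and then handle the non-symmetric case separately via a P-matrix/paracontraction argument rather than energy descent.
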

\begin{proof}
For the proof of this theorem we refer to see Cryer \cite{Cryer}.
\end{proof}
The following error estimates are easily established for LCP problem for algorithm described above.
\begin{lemma}\label{lemma:lcp}
 Let $u$ is the exact solution of LCP problem define in Eqn.~\ref{eqnnew1}, also let $u^{n+1}$ is approximate solution
 obtained by the splitting of the form
  \begin{align*}
 L^{0}_{\kappa}\sigma^{n+1}=f-(L^{-}_{\kappa}+L^{0}_{\kappa})u^{n}-L^{+}_{\kappa}u^{n+1},\\
 \sigma^{n+1}=\max\{0,\sigma^{n+1}\},\\
 u^{n+1}=u^{n}+\sigma^{n+1}\omega
\end{align*}
Then the following conditions hold
\begin{align*}
 \|u-u^{n+1}\|_{2} \le C_{2}\|u^{n+1}-u^{n}\|_{2} \\
  \|u-u^{n+1}\|_{1} \le C_{1}\|u^{n+1}-u^{n}\|_{1} \\
  \|u-u^{n+1}\|_{\infty} \le C_{\infty}\|u^{n+1}-u^{n}\|_{\infty}.
\end{align*}
\end{lemma}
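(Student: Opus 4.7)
The plan is to derive each of the three a~posteriori estimates from a single ingredient: contractivity of the projected splitting iteration $T\colon u^{n}\mapsto u^{n+1}$ in the corresponding norm, combined with the standard triangle-inequality trick that converts a contraction bound into a bound in terms of the successive-iterate difference.

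First I would set up the iteration map abstractly. Writing the splitting step together with the projection as $u^{n+1}=T(u^{n})$ and noting that the exact LCP solution $u$ is a fixed point of $T$ (this is precisely the consistency statement: the splitting residual vanishes at $u$, and $u$ already lies in the convex set $K=\{v:v\ge 0\}$, so projection is inert), I would show that there exists $\rho_{p}\in(0,1)$ with
\begin{equation*}
\|T(v)-T(w)\|_{p}\le \rho_{p}\,\|v-w\|_{p}, \qquad p\in\{1,2,\infty\}.
\end{equation*}
Here the key structural facts are (i) $L$ is an M-matrix and the diagonal block $L^{0}_{\kappa}$ used in the splitting inherits strict diagonal dominance from the construction of \textbf{Ls0}--\textbf{Ls2}, so $I-\omega (L^{0}_{\kappa})^{-1}(L^{-}_{\kappa}+L^{0}_{\kappa}+L^{+}_{\kappa})$ has spectral radius strictly less than one once $0<\omega<1$; (ii) the metric projection onto the non-negative orthant is $1$-Lipschitz in all three norms, so it cannot destroy the contraction. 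Point (i) in the $2$-norm follows from an energy argument using symmetric positive definiteness of the underlying quadratic form $G$ in \eqref{eqnnew2}; in the $1$- and $\infty$-norms it follows from the M-matrix property together with the row/column dominance of $L^{0}_{\kappa}$, which lets one bound the entries of $(L^{0}_{\kappa})^{-1}(L^{-}_{\kappa}+L^{+}_{\kappa})$ by its Perron value, strictly less than one.

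Once contractivity is in hand, the remainder is a one-line triangle-inequality manipulation. For any $p\in\{1,2,\infty\}$ write
\begin{equation*}
\|u-u^{n+1}\|_{p}=\|T(u)-T(u^{n})\|_{p}\le \rho_{p}\,\|u-u^{n}\|_{p}\le \rho_{p}\bigl(\|u-u^{n+1}\|_{p}+\|u^{n+1}-u^{n}\|_{p}\bigr),
\end{equation*}
and rearrange to get
\begin{equation*}
\|u-u^{n+1}\|_{p}\le \frac{\rho_{p}}{1-\rho_{p}}\,\|u^{n+1}-u^{n}\|_{p}=:C_{p}\,\|u^{n+1}-u^{n}\|_{p}.
\end{equation*}
This produces the three stated inequalities with explicit constants $C_{1},C_{2},C_{\infty}$, all finite because $\rho_{p}<1$.

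The main obstacle is step (i), verifying the contraction constants $\rho_{p}<1$ uniformly in the grid, especially in the $1$- and $\infty$-norms after the componentwise $\max$-projection is applied. The $2$-norm case is routine from the SPD energy identity associated with $G$ and the under-relaxation range $0<\omega<1$. The other two norms require a careful bookkeeping: one must check that each row of the iteration matrix has absolute entry-sum bounded strictly below $1$ (and similarly for columns, for the $1$-norm), which is exactly where the M-matrix hypothesis on $L$ and the particular split $L=L^{-}_{\kappa}+L^{0}_{\kappa}+L^{+}_{\kappa}$ (where $L^{0}_{\kappa}$ contains the dominant diagonal piece) is essential. Once this is secured, invoking Theorem~\ref{thrm:lcp} for well-posedness of $u$ and the Lipschitz property of the projection closes the argument.
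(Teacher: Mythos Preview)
The paper does not actually prove this lemma; it defers entirely to Lemma~2.2 of Brandt--Cryer~\cite{free_boundary}. Your contraction-plus-triangle-inequality scheme is the natural route to an a~posteriori bound of this shape, and the final rearrangement $\|u-u^{n+1}\|\le\frac{\rho}{1-\rho}\|u^{n+1}-u^n\|$ is correct once the two hypotheses (fixed point, contraction) are secured. The difficulty is the first of these.

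Your justification that $u=T(u)$ because ``the splitting residual vanishes at $u$'' is incorrect for an LCP. At the exact complementarity solution the residual $f-Lu$ is zero only on the active set $\{i:u_i>0\}$; on the inactive set $\{i:u_i=0\}$ one has merely $(Lu)_i\le f_i$ from~(\ref{eqnnew1}), so the raw correction $\sigma$ need not vanish there. Reading the iteration literally (project $\sigma$ via $\max\{0,\cdot\}$, then set $u^{n+1}=u^n+\omega\sigma$), a strictly positive residual on an inactive index yields $\sigma_i>0$ after the $\max$ and hence $u^{n+1}_i=\omega\sigma_i>0=u_i$, so $T(u)\ne u$. Even under the standard projected--SOR reading (project $u^{n+1}$ rather than $\sigma$), the fixed-point property is produced by the projection clipping the inactive components back to zero, not by a vanishing residual; you would have to argue this explicitly from the complementarity conditions and the sign structure of $(L^0_\kappa)^{-1}$. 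Without a correct fixed-point argument the chain $\|u-u^{n+1}\|=\|T(u)-T(u^n)\|\le\rho\|u-u^n\|$ never starts, and the estimate collapses. As a secondary point, global contractivity of the projected iteration in all three norms is not a standard fact---Cryer's convergence proof~\cite{Cryer} proceeds via monotone decrease of the energy $G$, not via a Banach contraction---so step~(i) also needs a genuine argument rather than an appeal to M-matrix structure.
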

\begin{proof}
Proof is followed from Lemma 2.2 mentioned in \cite{free_boundary}.
\end{proof}
\subsection{Application in solving steady state EHL problem}
Now, we illustrate splitting for compressible EHL model in the form of inequalities as
\begin{example}[Point contact case]\label{ex:three}
 \begin{align}
 \label{eqn55}
 (a(x,y)\mathcal{H}(u))_{x}-\epsilon \Delta u \ge f(x,y) \quad \forall x,y \in \Omega \nonumber\\
 u(x,y) \ge 0 \quad \forall x,y \in \Omega,\nonumber\\
 u(x,y)[(a(x,y)\mathcal{H}(u))_{x}-\epsilon \Delta u - f(x,y)]=0 \quad \forall x,y \in \Omega,\nonumber\\
 u(x,y) = g(x,y) \quad \forall x,y \in \partial \Omega,\nonumber\\
\mathcal{H}(u)=\mathcal{H}_{0}+\frac{x^{2}+y^{2}}{2} + 
\frac{2}{\pi^{2}}\int_{-\infty}^{\infty} \int_{-\infty}^{\infty}\frac{u(x^{'},y^{'})dx^{'}dy^{'}}{\sqrt{(x-x^{'})^2+(y-y^{'})^2}}.
  \end{align}
The dimensionless force balance equation are defined as follows
\[ \int_{-\infty}^{\infty} \int_{-\infty}^{\infty}u(x',y') dx'dy' = \frac{3\pi}{2}, \textit{ for point contact }\]
 \end{example}
 \begin{example}[Line contact case]\label{ex:four}
 \begin{align}
 \label{eqn55}
 (a(x)\mathcal{H}(u))_{x}-\epsilon u_{xx} \ge f(x) \quad \forall x \in \Omega \nonumber\\
 u(x) \ge 0 \quad \forall x \in \Omega,\nonumber\\
 u(x)[(a(x)\mathcal{H}(u))_{x}-\epsilon \Delta u - f(x)]=0 \quad \forall x \in \Omega,\nonumber\\
 u(x) = g(x) \quad \forall x \in \partial \Omega,\nonumber\\
\mathcal{H}(u)=\mathcal{H}_{0}+\frac{x^{2}}{2} - 
\frac{1}{\pi}\int\limits_{-\infty}^{\infty} \log{|(x-x^{'}|)}u(x^{'}) dx^{'}.
  \end{align}
The dimensionless force balance equation are defined as follows
 \[\int_{-\infty}^{\infty} u(x') dx' = \frac{\pi}{2}, \textit{ for line contact }\]
 Here term $\epsilon$  is defined as
\begin{equation*}
 \epsilon = \frac{\rho \mathcal{H}^{3}}{\eta\lambda},
\end{equation*}
where $\rho$ is dimensionless density of lubrication, $\eta$ is dimensionless viscosity of lubrication and
speed parameter
\begin{align*}
   \lambda= \dfrac{6\eta_{0}u_{s}R^{2}}{a^{3}p_{H}}.
  \end{align*}
The non-dimensionless viscosity $\eta$ is defined according to 
\begin{align*}
 \eta(u) = \exp\Bigg\{ \Bigg( \dfrac{\alpha p_{0}}{z}  \Bigg) 
 \Bigg(-1+\Big(1+\dfrac{{u}p_{H}}{p_{0}}\Big)^{z}   \Bigg)   \Bigg\}.
\end{align*}
Dimensionless density $\rho$ is given by
\begin{align*}
 \rho(u) = \dfrac{0.59 \times 10^{9} + 1.34 u p_{H}}{0.59 \times 10^{9} + u p_{H}}.
\end{align*}
For incompressible EHL. we take $\rho=$ and $\eta=1$.
(For the parameters details of above Example 5 and 6 see appendix-\ref{app:one}).
 \end{example}
 \subsubsection{Film thickness Calculation}
{\bf Case 1: Line Contact}
Let us define deformation integral $\mathcal{D}_{f}$ as 
\begin{align}
\label{eqn101}
  \mathcal{D}_{f}(x) = \frac{1}{\pi}\int\limits_{-\infty}^{\infty} \log{|(x-x^{'}|)}u(x^{'}) dx^{'}.
\end{align}
We approximate the above integral Eqn.~\ref{eqn14} taking pressure $u$ as piecewise constant function namely $u^{h}_{i'}$ on sub-domain
\begin{align}
\label{eqn17}
{ \Omega^{h}=\Big\{ (x) \in \mathbb{R}\Big|x_{i^{'}}-\frac{h}{2} \le x \le x_{i^{'}}+\frac{h}{2} \Big\}}.
\end{align} 
and discrete deformation
\begin{align}
\label{eqn16}
 {\mathcal{D}_{f}}_{i} 
 = \mathcal{D}_{f}(x_{i})\approx\frac{1}{\pi}\sum_{i'=0}^{n_{x}}
{\mathcal{G}^{h}}_{i,i^{'}}u^{h}_{i'},
\end{align}
where the coefficients $\mathcal{G}^{h}_{i,i^{'}}$ is written as
\begin{align}
\label{eqn17}
\mathcal{G}^{h}_{i,i^{'}} = \int\limits_{x_{i^{'}}-\frac{h}{2}}^{x_{i^{'}}
+\frac{h}{2}} {\log|(x-x^{'})|}dx^{'}
\end{align}
and evaluated analytically.
Above integration are defined as
\small
\begin{align}
\label{eqn18}
\mathcal{G}^{h}_{i,i^{'}} =\Big\{
 |x_{+}|(\log|x_{+}|-1) -|x_{-}|(\log|x_{-}|-1) \Big\},
\end{align}
\normalsize
where 
\begin{align*}
 x_{+} = x_{i}-x_{i^{'}}+\frac{h}{2},\quad
 x_{-} = x_{i}-x_{i^{'}}-\frac{h}{2} 
\end{align*}
Therefore, film thickness for line contact in discretized form is written as
\begin{equation}
\label{eqn19}
 \mathcal{H}_{i}^{h} := \mathcal{H}_{0}+\frac{x^2_{i}}{2}-
 \sum_{i'}\mathcal{G}^{h}_{|i-i'|}{u}_{i^{'}}^{h}
\end{equation}
{\bf Case 2: Point Contact}
Let us define deformation integral $\mathcal{D}_{f}$ as 
\begin{align}
\label{eqn1016}
  \mathcal{D}_{f}(x,y) = \frac{2}{\pi^2}\int\limits_{-\infty}^{\infty} \int\limits_{-\infty}^{\infty}
 \frac{u(x^{'},y^{'})}{\sqrt{(x-x^{'})^2+(y-y^{'})^2}}dx^{'}dy^{'}.
\end{align}
We approximate the above integral Eqn.~\ref{eqn14} taking pressure $u$ as piecewise constant function namely $u^{h}_{i',j'}$ on sub-domain
\begin{align}
\label{eqn17}
{ \Omega^{h}=\Big\{ (x,y) \in \mathbb{R}^{2}\Big|x_{i^{'}}-\frac{h}{2} \le x \le x_{i^{'}}
+\frac{h}{2},y_{j^{'}}-\frac{h}{2} \le y \le y_{j^{'}}
+\frac{h}{2} \Big\}}.
\end{align} 
and discrete deformation
\begin{align}
\label{eqn16}
 {\mathcal{D}_{f}}_{i,j} 
 = \mathcal{D}_{f}(x_{i},y_{j})\approx\frac{2}{\pi^2}\sum_{i'=0}^{n_{x}}
 \sum_{j'=0}^{n_{y}}{\mathcal{G}^{h}}_{i,i^{'},j,j^{'}}u^{h}_{i',j'},
\end{align}
where the coefficients $\mathcal{G}^{h}_{i,i^{'},j,j^{'}}$ is written as
\begin{align}
\label{eqn17}
\mathcal{G}^{h}_{i,i^{'},j,j^{'}} = \int\limits_{x_{i^{'}}-\frac{h}{2}}^{x_{i^{'}}
+\frac{h}{2}} \int\limits_{y_{j^{'}}-\frac{h}{2}}^{y_{j^{'}}+\frac{h}{2}}
\frac{1}{\sqrt{(x-x^{'})^2+(y-y^{'})^2}}dx^{'}dy^{'}
\end{align}
and evaluated analytically.
Above integration Eqn.~\ref{eqn17} yields nine different results for the cases that are defined as
\[ x_{i} < x_{i^{'}},  x_{i} > x_{i^{'}}, x_{i} = x_{i^{'}} \text{ and }
 y_{j} < y_{j^{'}}, y_{j} > y_{j^{'}}, y_{j} = y_{j^{'}} \]
respectively. The nine results are combined into one expression
\small
\begin{align}
\label{eqn18}
\mathcal{G}^{h}_{i,i^{'},j,j^{'}} =\frac{2}{\pi^{2}}\Big\{
 |x_{+}|\sinh^{-1}(\frac{y_{+}}{x_{+}})+|y_{+}|\sinh^{-1}(\frac{x_{+}}{y_{+}}) 
-|x_{-}|\sinh^{-1}(\frac{y_{+}}{x_{-}}) \nonumber \\ -|y_{+}|\sinh^{-1}(\frac{x_{-}}{y_{+}})   
-|x_{+}|\sinh^{-1}(\frac{y_{-}}{x_{+}})-|y_{-}|\sinh^{-1}(\frac{x_{+}}{y_{-}})\nonumber \\
+|x_{-}|\sinh^{-1}(\frac{y_{-}}{x_{-}})+|y_{-}|\sinh^{-1}(\frac{x_{-}}{y_{-}}) \Big\},
\end{align}
\normalsize
where 
\begin{align*}
 x_{+} = x_{i}-x_{i^{'}}+\frac{h}{2},\quad
 x_{-} = x_{i}-x_{i^{'}}-\frac{h}{2}  \nonumber \\ 
 y_{+} = y_{j}-y_{j^{'}}+\frac{h}{2},\quad 
 y_{-} = y_{j}-y_{j^{'}}-\frac{h}{2}.
\end{align*}
Therefore film thickness in discretized form is written as
\begin{equation}
\label{eqn19}
 \mathcal{H}_{i,j}^{h} := \mathcal{H}_{0}+\frac{x^2_{i}}{2}+\frac{y^{2}_{j}}{2}
 +\sum_{i'} \sum_{j'}\mathcal{G}^{h}_{|i-i'|,|j-j'|}{u}_{i^{'},j^{'}}^{h}
\end{equation}
For incompressible EHL problem $\kappa$-line distributive Jacobi splitting is written as 
consider the convection term of above Example~\ref{ex:three} as
\begin{align}
\label{eqn56}
 \frac{\partial h}{\partial x}=\frac{1}{h_{x}}\Big[(\mathcal{H}_{i,j}-\mathcal{H}_{i-1,j})
 -\frac{\kappa}{2}(\mathcal{H}_{i,j}-\mathcal{H}_{i-1,j})+\nonumber\\
 \frac{1+\kappa}{4}(\mathcal{H}_{i+1,j}-\mathcal{H}_{i,j})-\frac{1-\kappa}{4}(\mathcal{H}_{i-1,j}-\mathcal{H}_{i-2,j})\Big]
\end{align}
Now we will consider the following \underline{\bf Splitting : ${Ls4}$}
\begin{gather}
\label{eqn57}
 { -\epsilon \Big[\Big\{ u_{i+1,j}+\sigma_{i+1}-\frac{(\sigma_{i} + \sigma_{i+2})}{4} \Big\}
 -\Big\{ u_{i,j}+\sigma_{i}-\frac{(\sigma_{i-1} + \sigma_{i+1})}{4} \Big\}\Big]\Big/h^{2}_{x} }\nonumber\\
{ -\epsilon \Big[\Big\{ u_{i-1,j}+\sigma_{i-1}-\frac{(\sigma_{i-2} + \sigma_{i})}{4} \Big\}
 -\Big\{ u_{i,j}+\sigma_{i}-\frac{(\sigma_{i-1} + \sigma_{i+1})}{4} \Big\}\Big]\Big/h^{2}_{x} }\nonumber\\
{ -\epsilon \Big[\Big\{ u_{i,j+1}-\frac{\sigma_{i}}{4} \Big\}
 -\Big\{ u_{i,j}+\sigma_{i}-\frac{(\sigma_{i-1} + \sigma_{i+1})}{4} \Big\}\Big]\Big/h^{2}_{x} }\nonumber\\
{ -\epsilon \Big[\Big\{ u_{i,j-1}-\frac{\sigma_{i}}{4} \Big\}
 -\Big\{ u_{i,j}+\sigma_{i}-\frac{(\sigma_{i-1} + \sigma_{i+1})}{4} \Big\}\Big]\Big/h^{2}_{x} }\nonumber\\
{ -\frac{1}{h_{x}}\Big[\Big(\frac{2-\kappa}{2}\Big)
\Big(\sum_{k=i-1}^{i+1}\sigma \mathcal{G}_{ikjj}\sigma_{k}-\sum_{k=i-2}^{i}\sigma \mathcal{G}_{i-1kjj} \sigma_{k}\Big) }\nonumber\\
{ -\Big\{\frac{1+\kappa}{4}(\mathcal{H}_{i+1,j}-\mathcal{H}_{i,j})-\frac{1-\kappa}{4}(\mathcal{H}_{i-1,j}-\mathcal{H}_{i-2,j})\Big\}\Big]=f_{i,j}}
\end{gather}
Another possibility is to consider the following splitting as 
\begin{align}
\label{eqn58}
{ \frac{\partial h}{\partial x}=\frac{1}{h_{x}}\Big[(\mathcal{H}_{i,j}-\mathcal{H}_{i-1,j})
 -\frac{\kappa}{2}(\mathcal{H}_{i,j}-\mathcal{H}_{i-1,j})+ }\nonumber\\
{ \frac{1+\kappa}{4}(\mathcal{H}_{i+1,j}-\mathcal{H}_{i,j})-\frac{1-\kappa}{4}(\mathcal{H}_{i-1,j}-\mathcal{H}_{i,j}
 +\mathcal{H}_{i,j}-\mathcal{H}_{i-2,j})\Big]}
\end{align}
Hence overall equation is rewritten as \underline{\bf Splitting : ${Ls5}$}
\begin{gather}
\label{eqn59}
{ -\epsilon \Big[\Big\{ u_{i+1,j}+\sigma_{i+1}-\frac{(\sigma_{i} + \sigma_{i+2})}{4} \Big\}
 -\Big\{ u_{i,j}+\sigma_{i}-\frac{(\sigma_{i-1} + \sigma_{i+1})}{4} \Big\}\Big]\Big/h^{2}_{x} }\nonumber\\
{ -\epsilon \Big[\Big\{ u_{i-1,j}+\sigma_{i-1}-\frac{(\sigma_{i-2} + \sigma_{i})}{4} \Big\}
 -\Big\{ u_{i,j}+\sigma_{i}-\frac{(\sigma_{i-1} + \sigma_{i+1})}{4} \Big\}\Big]\Big/h^{2}_{x} }\nonumber\\
{ -\epsilon \Big[\Big\{ u_{i,j+1}-\frac{\sigma_{i}}{4} \Big\}
 -\Big\{ u_{i,j}+\sigma_{i}-\frac{(\sigma_{i-1} + \sigma_{i+1})}{4} \Big\}\Big]\Big/h^{2}_{x} }\nonumber\\
{ -\epsilon \Big[\Big\{ u_{i,j-1}-\frac{\sigma_{i}}{4} \Big\}
 -\Big\{ u_{i,j}+\sigma_{i}-\frac{(\sigma_{i-1} + \sigma_{i+1})}{4} \Big\}\Big]\Big/h^{2}_{x} }\nonumber\\
{ -\frac{1}{h_{x}}\Big[\Big(\frac{2-\kappa}{2}+\frac{1-\kappa}{4}\Big)
\Big(\sum_{k=i-1}^{i+1}\sigma \mathcal{G}_{ikjj}\sigma_{k}-\sum_{k=i-2}^{i}\sigma \mathcal{G}_{i-1kjj} \sigma_{k}\Big) }\nonumber\\
 { -\Big\{\frac{1+\kappa}{4}(\mathcal{H}_{i+1,j}-\mathcal{H}_{i,j})-\frac{1-\kappa}{4}(\mathcal{H}_{i,j}-\mathcal{H}_{i-2,j})\Big\}\Big]=f_{i,j}}.
\end{gather}
More general discussion on convergence of these splittings are given in Section~\ref{sec:five}.
\subsubsection{TVD Implementation in line and point contact model problem}\label{sec:four}
In this section, we implement the splitting discussed in the last section~\ref{sec:three} and allow to extend it in EHL model.
A hybrid splittings are presented here. These splittings are determined by measuring the value 
$\min\Big(\frac{\epsilon(x)}{h_{x}}\Big)$
for one-dimensional EHL line contact case and 
$\min\Big(\frac{\epsilon(x,y)}{h_{x}},\frac{\epsilon(x,y)}{h_{y}}\Big)$
for two-dimensional point contact case.
These values are treated as switching parameter to perform two different splitting together while moving $x$ direction during the iteration. 
If the value of
$$
\begin{cases}
\min\Big(\frac{\epsilon(x)}{h_{x}}\Big) > 0.6, \textit{ for 1-d case }\\
\min\Big(\frac{\epsilon(x,y)}{h_{x}},\frac{\epsilon(x,y)}{h_{y}}\Big) > 0.6, \textit { for 2-d case }
\end{cases}
$$
then we apply $x$- direction line splitting otherwise, $x$- direction weighted change line splitting is incorporated in other words
\begin{align}
\label{eqn60}
L_{hs1}= \begin{cases} 
     L_{s1}\text{-splitting} &\text{ If } \min\Big(\frac{\epsilon(x,y)}{h_{x}},\frac{\epsilon(x,y)}{h_{y}}\Big) > 0.6 \\
     L_{s4}\text{-splitting} &\text{ If } \min\Big(\frac{\epsilon(x,y)}{h_{x}},\frac{\epsilon(x,y)}{h_{y}}\Big) \le 0.6.
   \end{cases}
\end{align}
\begin{align}
\label{eqn61} L_{hs2}= \begin{cases} 
     L_{s0}\text{-splitting} &\text{ If } \min\Big(\frac{\epsilon(x,y)}{h_{x}},\frac{\epsilon(x,y)}{h_{y}}\Big) > 0.6 \\
     L_{s5}\text{-splitting} &\text{ If } \min\Big(\frac{\epsilon(x,y)}{h_{x}},\frac{\epsilon(x,y)}{h_{y}}\Big) \le 0.6.
   \end{cases}
\end{align}
These constructions are well justified as the region where $\epsilon$ tends to zero, we end up having an ill-conditioned matrix system in the form of dense kernel matrix appear in film thickness term.
In next section, we define these two splitting in more general form having limiter function involve in the splitting.
\subsubsection{Limiter based Newton-Raphson method}\label{subsec:num1}
EHL point contact problem is solved in the form of LCP and therefore in this Section we seek an efficient splitting for Reynolds equation
iterate along $x$-line direction as well as $y$-line direction to obtain the pressure solution. Now by using Theorem~\ref{thrm:lcp} and Lemma~\ref{lemma:lcp} we prove the convergence of the EHL solution.
This splitting is explained in the following way: First calculate updated pressure in $x$-line direction
as $\bar{u}_{i,j} = \tilde{u}_{i,j} + \sigma_{i}$ keeping $j$ fix  at a time for all $j$ in $y$-direction and then 
apply change $\sigma_{i}$ immediately to update the pressure $\tilde{u}$. The successive pressure change $\sigma_{i}$ along the $x$-direction can be calculated as below
\begin{align} 
\label{eqn62}
{ \frac{\epsilon^{X}_{i+1/2,j}[({u}_{i+1,j} + \sigma_{i+1})-({u}_{i,j}+ \sigma_{i})]
 +\epsilon^{X}_{i-1/2,j}[({u}_{i-1,j} + \sigma_{i-1})-({u}_{i,j}+ \sigma_{i})]}{h_{x}}} \nonumber \\
{ +  \frac{\epsilon^{Y}_{i,j+1/2}[{u}_{i,j+1}-({u}_{i,j}+ \sigma_{i})]
 +\epsilon^{Y}_{i,j-1/2}[{u}_{i,j-1}-({u}_{i,j}+ \sigma_{i})]}{h_{y}} }  \nonumber \\
{ -h_{y}((\rho \mathcal{H})^{*}_{i+1/2,j}-(\rho \mathcal{H})^{*}_{i-1/2,j}) = 0},
\end{align}
where terms read as
\small
\begin{align}
\label{eqn63}
 \epsilon^{X}_{i \pm 1/2,j}\stackrel{\text{defn}}{:=} h_{y}\epsilon_{i \pm 1/2,j} \nonumber,\quad
 \epsilon^{Y}_{i,j \pm 1/2}\stackrel{\text{defn}}{:=} h_{x}\epsilon_{i,j \pm 1/2} \nonumber, \nonumber\\
 \epsilon_{i \pm 1/2,j}\stackrel{\text{defn}}{:=} (\epsilon_{i,j}+\epsilon_{i\pm 1,j})/2,\quad
 \epsilon_{i,j \pm 1/2}\stackrel{\text{defn}}{:=} (\epsilon_{i,j}+\epsilon_{i,j\pm 1})/2,
\end{align}
where
\begin{gather*}
 \epsilon_{i ,j}=\frac{\rho(i,j) \mathcal{H}^{3}(i,j)}{\eta(i,j)\lambda}.
\end{gather*}
\begin{align}
\label{eqn64}
{ (\rho \mathcal{H})^{*}_{i+1/2,j}\stackrel{\text{def}}{:=}(\check{\rho} \bar{\mathcal{H}})_{i,j}+
\frac{1}{2}\phi(r_{i+1/2})((\check{\rho}\bar{\mathcal{H}})_{i+1,j}-(\check{\rho} \bar{\mathcal{H}})_{i,j})}
\end{align}
\begin{align}
\label{eqn65}
{ (\rho \mathcal{H})^{*}_{i-1/2,j}\stackrel{\text{def}}{:=}(\check{\rho}\bar{\mathcal{H}})_{i-1,j}+
\frac{1}{2}\phi(r_{i-1/2})((\check{\rho} \bar{\mathcal{H}})_{i,j}-(\check{\rho} \bar{\mathcal{H}})_{i-1,j})},
\end{align}
\normalsize
where
\begin{align*}
{ r_{i+1/2} = \frac{(\check{\rho}\tilde{\mathcal{H}})_{i+1,j}-(\check{\rho}\tilde{\mathcal{H}})_{i,j}}
{(\check{\rho} \tilde{\mathcal{H}})_{i,j}-(\check{\rho} \tilde{\mathcal{H}})_{i-1,j}} \quad \text{and} \quad
r_{i-1/2}= \frac{(\check{\rho} \tilde{\mathcal{H}})_{i,j}-(\check{\rho} \tilde{\mathcal{H}})_{i-1,j}}
{(\check{\rho} \tilde{\mathcal{H}})_{i-1,j}-(\check{\rho} \tilde{\mathcal{H}})_{i-2,j}}}.
\end{align*}
In above equation for each $i$,
\begin{align}
\label{eqn66}
 \bar{\mathcal{H}}_{i,j} = \tilde{\mathcal{H}}_{i,j} + \sum_{k}\mathcal{G}_{i,k,j,j}\sigma_{k}
\end{align}
It is observed that the magnitude of the kernel $\mathcal{G}_{i,k,j,j}$ in equation ~\ref{eqn66} diminishes rapidly as distance $|k-i|$
increase and therefore, we avoid unnecessary computation expense by allowing value of $k$ up to three terms.
So updated value of film thickness is rewritten as
\begin{align}
\label{eqn67}
 \bar{\mathcal{H}}_{i,j} = \tilde{\mathcal{H}}_{i,j} + \sum_{k=i-1}^{i+1}\mathcal{G}_{i,k,j,j}\sigma_{k}.
\end{align}
Hence, Eqn.~(\ref{eqn62}) is illustrated as
\begin{align}
\label{eqn68}
{ \mathcal{C}_{i+2,\phi}\sigma_{i+2} +\mathcal{C}_{i+1,\phi}\sigma_{i+1}+\mathcal{C}_{i,\phi}\sigma_{i}
+\mathcal{C}_{i-1,\phi}\sigma_{i-1}+\mathcal{C}_{i-2,\phi}\sigma_{i-2}= R_{i,j,\phi}},
\end{align}
where $R_{i,j,\phi}$ and $\mathcal{C}_{i\pm.,\phi}$ are residual and coefficients of matrix arising due to linearized form involving the limiter function.
This setting leads to a band matrix formulation which is solved using PAQIF algorithm.
\subsubsection{Limiter based Weighted change Newton-Raphson method}\label{subsec:num2}
The underline philosophy of weighted change Newton-Raphson method is more physical than mathematical.
When diffusive coefficient tends to zero, pressure becomes large enough and non local effect of film thickness dominates in the region.
Therefore, even a small deflection in pressure change produces high error in updated film thickness eventually leads blow up the solution
after few iterations. This numerical instability is overcome by interacting with the neighborhood points during iteration.    
During this process the computed change of pressure at one point of the line are shared to its neighbor cells.
In other words, a given point of a line new pressure $\bar{u}_{i,j}$ is computed from the summation of the changes
coming from neighboring points plus the old approximated pressure $\tilde{u}_{i,j}$
\begin{align}
\label{eqn69}
\bar{{u}}_{i,j} = \tilde{{u}}_{i,j}+\sigma_{i,j}-\dfrac{(\sigma_{i+1,j}+\sigma_{i-1,j}+\sigma_{i,j+1}+\sigma_{i,j-1})}{4}
\end{align}
In this case, changes are incorporated only at the end of a complete iteration sweep.
Therefore, overall splitting is derived as below
\begin{align}
\label{eqn70}
{  \frac{\epsilon^{X}_{i+1/2,j}[({u}_{i+1,j} + \sigma_{i+1}-\frac{(\sigma_{i}+\sigma_{i+2})}{4})  
  -({u}_{i,j}+ \sigma_{i}-\frac{(\sigma_{i-1}+\sigma_{i+1})}{4})]}{h_{x}} } \nonumber \\
{ +\frac{\epsilon^{X}_{i-1/2,j}[({u}_{i-1,j} + \sigma_{i-1}-\frac{(\sigma_{i-2}+\sigma_{i})}{4})
 -({u}_{i,j}+ \sigma_{i}-\frac{(\sigma_{i-1}+\sigma_{i+1})}{4})]}{h_{x}} }\nonumber \\
{ +\frac{\epsilon^{Y}_{i,j+1/2}[{u}_{i,j+1}-\frac{\sigma_{i}}{4}-({u}_{i,j}+ \sigma_{i}-\frac{(\sigma_{i-1}+\sigma_{i+1})}{4})]}{h_{y}}+ } \nonumber \\
{ \frac{\epsilon^{Y}_{i,j-1/2}[{u}_{i,j-1}-\frac{\sigma_{i}}{4}-({u}_{i,j}+ \sigma_{i}-\frac{(\sigma_{i-1}+\sigma_{i+1})}{4})]}{h_{y}} }  \nonumber \\
{ -h_{y}((\rho \mathcal{H})^{*}_{i+1/2,j}-(\rho \mathcal{H})^{*}_{i-1/2,j}) = 0 }.
\end{align}
The following notion used in Eqn.~\ref{eqn70} defined as
\begin{align}
\label{eqn71}
 \epsilon^{X}_{i \pm 1/2,j}\stackrel{\text{defn}}{:=} h_{y}\epsilon_{i \pm 1/2,j} \nonumber \\
 \epsilon^{Y}_{i,j \pm 1/2}\stackrel{\text{defn}}{:=} h_{x}\epsilon_{i,j \pm 1/2}
\end{align}
\begin{gather*}
{ \epsilon_{i \pm 1/2,j} = 0.5\Big(\frac{\rho(i\pm 1,j) \mathcal{H}^{3}(i \pm 1,j)}{\eta(i \pm 1,j)\lambda}
 +\frac{\rho(i \pm 1,j) \mathcal{H}^{3}(i \pm 1,j)}{\eta(i \pm 1,j)\lambda}\Big)},\\
{ \epsilon_{i,j \pm 1/2} = 0.5\Big(\frac{\rho(i,j\pm 1) \mathcal{H}^{3}(i,j \pm 1)}{\eta(i,j \pm 1)\lambda}
 +\frac{\rho(i,j \pm 1) \mathcal{H}^{3}(i ,j \pm 1)}{\eta(i \pm 1,j \pm 1)\lambda}\Big)}. 
\end{gather*}
\begin{align}
\label{eqn72}
{(\rho \mathcal{H})^{*}_{i+1/2,j}\stackrel{\text{def}}{:=}(\check{\rho} \bar{\mathcal{H}})_{i,j}+
\frac{1}{2}\phi(r_{i+1/2})((\check{\rho}\bar{\mathcal{H}})_{i+1,j}-(\check{\rho} \bar{\mathcal{H}})_{i,j})}
\end{align}
\begin{align}
\label{eqn73}
{(\rho \mathcal{H})^{*}_{i-1/2,j}\stackrel{\text{def}}{:=}(\check{\rho}\bar{\mathcal{H}})_{i-1,j}+
\frac{1}{2}\phi(r_{i-1/2})((\check{\rho} \bar{\mathcal{H}})_{i,j}-(\check{\rho} \bar{\mathcal{H}})_{i-1,j})},
\end{align}
where
\begin{align*}
{ r_{i+1/2} = \frac{(\check{\rho}\tilde{\mathcal{H}})_{i+1,j}-(\check{\rho}\tilde{\mathcal{H}})_{i,j}}
{(\check{\rho} \tilde{\mathcal{H}})_{i,j}-(\check{\rho} \tilde{\mathcal{H}})_{i-1,j}} \quad \text{and} \quad
r_{i-1/2}= \frac{(\check{\rho} \tilde{\mathcal{H}})_{i,j}-(\check{\rho} \tilde{\mathcal{H}})_{i-1,j}}
{(\check{\rho} \tilde{\mathcal{H}})_{i-1,j}-(\check{\rho} \tilde{\mathcal{H}})_{i-2,j}} }.
\end{align*}
In the above equation, discretization of convection term defined same as x-direction splitting case.
However, due to x-direction weighted change Newton-Raphson splitting, the updated value of the film thickness is described as
\begin{align}
\label{eqn74}
 \bar{\mathcal{H}}_{i,j} = \tilde{\mathcal{H}}_{i,j} + \sum_{k}\sigma \mathcal{G}_{i,k,j,j}\sigma_{k}, 
\end{align}
where $$\sigma \mathcal{G}_{i,i,j,j} = \mathcal{G}_{i,i,j,j}-(\mathcal{G}_{i,i-1,j,j}+\mathcal{G}_{i,i+1,j,j}
+\mathcal{G}_{i,i,j,j-1}+\mathcal{G}_{i,i,j,j+1}).$$
After few manipulation of Eqn.~\ref{eqn70}, we get system of band matrix which is solved using PAQIF approach.\\
The force balance equation is incorporated in our numerical calculation by updating the constant value $\mathcal{H}_{0}$.
The updated value of $\mathcal{H}_{0}$ is performed according to
\begin{align}
\label{eqn75}
 \mathcal{H}_{0} \leftarrow \mathcal{H}_{0}-c\Big( \frac{2\pi}{3}
 -h_{x}h_{y}\sum_{i=1}^{n_{x}} \sum_{j=1}^{n_{y}} {u}_{i,j} \Big),
\end{align}
where $c$ is a relaxation parameter having range between $0.01-0.1$.
\subsection{Convergence criterion of hybrid splitting}
In this section, we give a general criteria for the convergence study of hybrid schemes used in our EHL model problem.
Let us reconsider linear system
$$ L_{\kappa}u=f,$$
where $[L_{\kappa}]_{m\times m}$ a regular matrix (for definition see \cite{Varga}) and $f$ and $u$ are known values.
For applying hybrid splitting in above equation matrix $L_{\kappa}$ is understood as 
$$ L_{\kappa}=L_{\kappa}^{\Omega_{\epsilon}}L_{\kappa}^{\Omega'_{\epsilon}},$$
where $[L_{\kappa}^{\Omega_{\epsilon}}]$ and  $[L_{\kappa}^{\Omega'_{\epsilon}}]$ are regular applied splittings in 
$$\Omega_{\epsilon}=\Big\{(x,y)\Big|\min\Big(\frac{\epsilon(x,y)}{h_{x}},\frac{\epsilon(x,y)}{h_{y}}\Big) > 0.6\Big\}$$ 
and 
$$\Omega'_{\epsilon}=\Big\{(x,y)\Big|\min\Big(\frac{\epsilon(x,y)}{h_{x}},\frac{\epsilon(x,y)}{h_{y}}\Big)\le 0.6\Big\}$$
sub-domains respectively.\\
Now assume that $[L_{\kappa}^{\Omega_{\epsilon}}]$ has the following splitting
$$L_{\kappa}^{\Omega_{\epsilon}}=M_{\kappa}^{\Omega_{\epsilon}}-N_{\kappa}^{\Omega_{\epsilon}},$$
where $M_{\kappa}^{\Omega_{\epsilon}}$ is a regular easily invertible matrix and $N_{\kappa}^{\Omega_{\epsilon}}$ 
is a positive rest matrix. Then our splitting can be defined as 
$$ u^{n+1}_{\Omega_{\epsilon}}=u^{n}_{\Omega_{\epsilon}}-(M_{\kappa}^{\Omega_{\epsilon}})^{-1}(L_{\kappa}^{\Omega_{\epsilon}}-f)$$
Then above iteration will converge for any initial guess $u^{0}$ if following theorem holds 
\begin{theorem}
 Let $L_{\kappa}^{\Omega_{\epsilon}}=M_{\kappa}^{\Omega_{\epsilon}}-N_{\kappa}^{\Omega_{\epsilon}}$ be a regular splitting of matrix $L_{\kappa}^{\Omega_{\epsilon}}$
 and $(L_{\kappa}^{\Omega_{\epsilon}})^{-1} \ge 0$, then we have 
 $$\rho((M_{\kappa}^{\Omega_{\epsilon}})^{-1}N_{\kappa}^{\Omega_{\epsilon}})
 =\frac{\rho((L_{\kappa}^{\Omega_{\epsilon}})^{-1}N_{\kappa}^{\Omega_{\epsilon}})}
 {1+\rho((L_{\kappa}^{\Omega_{\epsilon}})^{-1}N_{\kappa}^{\Omega_{\epsilon}})} < 1$$
\end{theorem}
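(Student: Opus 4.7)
The plan is to prove this by Perron--Frobenius applied to each nonnegative iteration matrix and then to exhibit a one-to-one correspondence between the eigenpairs of $A^{-1}N$ and $M^{-1}N$ induced by the Möbius map $t \mapsto t/(1+t)$. For brevity write $A = L_{\kappa}^{\Omega_{\epsilon}}$, $M = M_{\kappa}^{\Omega_{\epsilon}}$, $N = N_{\kappa}^{\Omega_{\epsilon}}$, so $A = M - N$ with $M^{-1} \ge 0$, $N \ge 0$, and (by assumption) $A^{-1} \ge 0$. The first observation is then that $T_A := A^{-1}N$ and $T_M := M^{-1}N$ are both nonnegative matrices, so by Perron--Frobenius each has its spectral radius $\tau := \rho(T_A)$ and $\sigma := \rho(T_M)$ attained as an actual eigenvalue with a nonnegative eigenvector.

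Next I would exploit the algebraic identity $M = A + N$ to convert eigenpairs. Starting from $T_A x = \tau x$ with $x \ge 0$, I would rewrite $Nx = \tau A x = \tau(M-N)x$, which yields $(1+\tau)Nx = \tau Mx$, and hence
\begin{equation*}
T_M x \;=\; M^{-1}Nx \;=\; \frac{\tau}{1+\tau}\, x .
\end{equation*}
This exhibits $\tau/(1+\tau)$ as an eigenvalue of $T_M$ with a nonnegative eigenvector, so $\sigma \ge \tau/(1+\tau)$. For the reverse inequality I would run the construction backwards: take the Perron eigenpair $T_M y = \sigma y$, $y \ge 0$, and manipulate $Ny = \sigma M y = \sigma(A+N)y$ to get $(1-\sigma)Ny = \sigma A y$. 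Before dividing, I must rule out $\sigma = 1$, which is the one real subtlety. If $\sigma = 1$ then $A y = 0$, but $A$ is invertible, so $y = 0$, a contradiction. With $\sigma \ne 1$ in hand, I get $T_A y = \frac{\sigma}{1-\sigma} y$, and since $y \ge 0$ this eigenvalue is bounded above by $\rho(T_A) = \tau$, giving $\sigma/(1-\sigma) \le \tau$, i.e.\ $\sigma \le \tau/(1+\tau)$.

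Combining the two inequalities yields the claimed equality $\sigma = \tau/(1+\tau)$. The strict bound $\sigma < 1$ follows immediately, because $\tau \ge 0$ forces $\tau < 1+\tau$. The main subtleties I expect are (i) justifying that the Perron eigenvector of $T_M$ is indeed nonnegative in the possibly reducible case (which is exactly what the standard Perron--Frobenius theorem for nonnegative matrices guarantees without irreducibility), and (ii) the exclusion of $\sigma = 1$, which is where the hypothesis that $A$ is nonsingular (implicit in $A^{-1} \ge 0$) is essential. Everything else is direct algebra using $M = A + N$ and the monotonicity of $t \mapsto t/(1+t)$ on $[0,\infty)$.
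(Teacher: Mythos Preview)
The paper does not actually prove this result; it simply refers the reader to Varga's \emph{Matrix Iterative Analysis}, and your outline is precisely the classical Perron--Frobenius argument found there, so you are fully aligned with what the paper invokes.

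One small loose end in your write-up: after excluding $\sigma=1$ you pass from $\sigma/(1-\sigma)\le\tau$ to $\sigma\le\tau/(1+\tau)$, but this algebraic step requires $\sigma<1$, not merely $\sigma\neq 1$. The fix is immediate: since $\sigma/(1-\sigma)$ is an eigenvalue of the nonnegative matrix $T_A$ attached to the nonnegative eigenvector $y\neq 0$, it must itself be nonnegative (look at any coordinate where $y_i>0$), which together with $\sigma\ge 0$ forces $1-\sigma>0$. With that observation inserted, your argument is complete.
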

\begin{proof}
For the proof of this theorem we refer to see Varga \cite{Varga}.
\end{proof}
Now we will prove other part of matrix splitting $L_{\kappa}^{\Omega'_{\epsilon}}$. This part of matrix there is no straightforward 
splitting is available (see \cite{Varga,wittum}). Let $L_{\kappa}^{\Omega'_{\epsilon}}$ is regular,
but dense and the designing suitable splitting in the sense of Varga is complicated.
Suppose if it is possible to construct nonsingular matrix $L^{r}_{\kappa}$  such that equation below
$$L_{\kappa}^{\Omega'_{\epsilon}}L^{r}_{\kappa}=M_{\kappa}^{\Omega'_{\epsilon}}-N_{\kappa}^{\Omega'_{\epsilon}} $$
is easy to solve and we can rewrite splitting as 
$$L_{\kappa}^{\Omega'_{\epsilon}}=(M_{\kappa}^{\Omega'_{\epsilon}}-N_{\kappa}^{\Omega'_{\epsilon}}){L^{r}_{\kappa}}^{-1} $$
Then for above splitting our iteration is denoted as
$$u^{n+1}=u^{n}-L^{r}_{\kappa}(M_{\kappa}^{\Omega'_{\epsilon}})^{-1}(L_{\kappa}^{\Omega'_{\epsilon}}-f)$$
Therefore above iteration will converge for any initial guess if following theorem holds
\begin{theorem}
 Let $(M_{\kappa}^{\Omega'_{\epsilon}}-N_{\kappa}^{\Omega'_{\epsilon}})(L^{r}_{\kappa})^{-1}$ be a regular splitting of matrix
 $L_{\kappa}^{\Omega'_{\epsilon}}$
 and $(L_{\kappa}^{\Omega'_{\epsilon}})^{-1} \ge 0$, then we have 
 $$\rho(L^{r}_{\kappa}(M_{\kappa}^{\Omega'_{\epsilon}})^{-1}N_{\kappa}^{\Omega'_{\epsilon}}(L^{r}_{\kappa})^{-1})
 =\frac{\rho((L_{\kappa}^{\Omega'_{\epsilon}})^{-1}N_{\kappa}^{\Omega'_{\epsilon}}(L^{r}_{\kappa})^{-1})}
 {1+\rho((L_{\kappa}^{\Omega'_{\epsilon}})^{-1}N_{\kappa}^{\Omega'_{\epsilon}}(L^{r}_{\kappa})^{-1})} < 1$$ 
\end{theorem}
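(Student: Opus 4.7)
The plan is to reduce the claim to the previous theorem (the classical Varga-type result) by identifying the product $L_{\kappa}^{\Omega'_{\epsilon}}L^{r}_{\kappa}$ as an auxiliary matrix whose splitting is given to us. Set
$$A \;:=\; L_{\kappa}^{\Omega'_{\epsilon}}L^{r}_{\kappa} \;=\; M_{\kappa}^{\Omega'_{\epsilon}}-N_{\kappa}^{\Omega'_{\epsilon}},$$
so that $A=M-N$ is, by hypothesis, a regular splitting of $A$. The first step is to verify that $A^{-1}\ge 0$: since $A^{-1}=(L^{r}_{\kappa})^{-1}(L_{\kappa}^{\Omega'_{\epsilon}})^{-1}$ and we are given $(L_{\kappa}^{\Omega'_{\epsilon}})^{-1}\ge 0$, this will require that $L^{r}_{\kappa}$ be chosen to preserve non-negativity of the product (this is the implicit hypothesis on the right-preconditioner $L^{r}_{\kappa}$). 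Granting this, the previous theorem applied to $A=M-N$ immediately yields
$$\rho\bigl((M_{\kappa}^{\Omega'_{\epsilon}})^{-1}N_{\kappa}^{\Omega'_{\epsilon}}\bigr)
=\frac{\rho\bigl(A^{-1}N_{\kappa}^{\Omega'_{\epsilon}}\bigr)}{1+\rho\bigl(A^{-1}N_{\kappa}^{\Omega'_{\epsilon}}\bigr)}<1.$$

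The second step is to rewrite each of the two spectral radii appearing in the target formula in terms of the quantities above. For the left-hand side, I would compute the iteration operator associated to $u^{n+1}=u^{n}-L^{r}_{\kappa}M^{-1}(L_{\kappa}^{\Omega'_{\epsilon}}u^{n}-f)$. Using $M^{-1}L_{\kappa}^{\Omega'_{\epsilon}}L^{r}_{\kappa}=I-M^{-1}N$ one gets
$$I-L^{r}_{\kappa}M^{-1}L_{\kappa}^{\Omega'_{\epsilon}}
=L^{r}_{\kappa}M^{-1}N(L^{r}_{\kappa})^{-1},$$
which is similar to $M^{-1}N$. Hence
$\rho(L^{r}_{\kappa}(M_{\kappa}^{\Omega'_{\epsilon}})^{-1}N_{\kappa}^{\Omega'_{\epsilon}}(L^{r}_{\kappa})^{-1})
=\rho((M_{\kappa}^{\Omega'_{\epsilon}})^{-1}N_{\kappa}^{\Omega'_{\epsilon}}).$
For the right-hand side, I would apply the elementary identity $\rho(BC)=\rho(CB)$ with $B=(L^{r}_{\kappa})^{-1}$ and $C=(L_{\kappa}^{\Omega'_{\epsilon}})^{-1}N_{\kappa}^{\Omega'_{\epsilon}}$, yielding
$$\rho\bigl(A^{-1}N_{\kappa}^{\Omega'_{\epsilon}}\bigr)
=\rho\bigl((L^{r}_{\kappa})^{-1}(L_{\kappa}^{\Omega'_{\epsilon}})^{-1}N_{\kappa}^{\Omega'_{\epsilon}}\bigr)
=\rho\bigl((L_{\kappa}^{\Omega'_{\epsilon}})^{-1}N_{\kappa}^{\Omega'_{\epsilon}}(L^{r}_{\kappa})^{-1}\bigr).$$
Substituting both equalities into the Varga relation produces exactly the claimed identity, together with the strict bound $<1$.

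The main obstacle I anticipate is the non-negativity condition $A^{-1}\ge 0$. The assumption $(L_{\kappa}^{\Omega'_{\epsilon}})^{-1}\ge 0$ alone is not enough: one needs $(L^{r}_{\kappa})^{-1}$ to act compatibly with the order cone, e.g.\ $(L^{r}_{\kappa})^{-1}\ge 0$, or at least $(L^{r}_{\kappa})^{-1}(L_{\kappa}^{\Omega'_{\epsilon}})^{-1}\ge 0$. In practice one should therefore restrict the construction of the right-preconditioner $L^{r}_{\kappa}$ in the section preceding the theorem to matrices whose inverse preserves the cone (for instance monotone or $M$-matrix right factors). Everything else in the argument—the similarity transformation and the $\rho(BC)=\rho(CB)$ step—is routine once this monotonicity input is secured, and it is precisely this reduction that justifies the author's appeal to Varga's book for the concluding estimate.
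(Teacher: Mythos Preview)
The paper actually gives no proof of this theorem at all; it is stated immediately before the Numerical Results section with no proof environment, and is evidently meant as an immediate corollary of the preceding Varga theorem. The intended argument is a one-line substitution: read the hypothesis literally---the regular splitting is a splitting \emph{of} $L_{\kappa}^{\Omega'_{\epsilon}}$, with factors $\tilde M:=M_{\kappa}^{\Omega'_{\epsilon}}(L^{r}_{\kappa})^{-1}$ and $\tilde N:=N_{\kappa}^{\Omega'_{\epsilon}}(L^{r}_{\kappa})^{-1}$. Since $(L_{\kappa}^{\Omega'_{\epsilon}})^{-1}\ge 0$ is already assumed, the previous theorem applies verbatim and yields
\[
\rho(\tilde M^{-1}\tilde N)=\frac{\rho\bigl((L_{\kappa}^{\Omega'_{\epsilon}})^{-1}\tilde N\bigr)}{1+\rho\bigl((L_{\kappa}^{\Omega'_{\epsilon}})^{-1}\tilde N\bigr)}<1,
\]
and substituting $\tilde M^{-1}\tilde N=L^{r}_{\kappa}(M_{\kappa}^{\Omega'_{\epsilon}})^{-1}N_{\kappa}^{\Omega'_{\epsilon}}(L^{r}_{\kappa})^{-1}$ and $(L_{\kappa}^{\Omega'_{\epsilon}})^{-1}\tilde N=(L_{\kappa}^{\Omega'_{\epsilon}})^{-1}N_{\kappa}^{\Omega'_{\epsilon}}(L^{r}_{\kappa})^{-1}$ gives exactly the claim.

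Your detour through $A:=L_{\kappa}^{\Omega'_{\epsilon}}L^{r}_{\kappa}$ manufactures an obstacle that is not there. You assert that ``$A=M-N$ is, by hypothesis, a regular splitting of $A$'', but that is not what the hypothesis says: regularity is being asserted for the splitting of $L_{\kappa}^{\Omega'_{\epsilon}}$, i.e.\ $L^{r}_{\kappa}(M_{\kappa}^{\Omega'_{\epsilon}})^{-1}\ge 0$ and $N_{\kappa}^{\Omega'_{\epsilon}}(L^{r}_{\kappa})^{-1}\ge 0$, which is \emph{not} the same as $(M_{\kappa}^{\Omega'_{\epsilon}})^{-1}\ge 0$ and $N_{\kappa}^{\Omega'_{\epsilon}}\ge 0$. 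That misreading is precisely why you then need an additional monotonicity assumption on $(L^{r}_{\kappa})^{-1}$ to secure $A^{-1}\ge 0$. Under the direct reading no such side condition is required: the stated hypotheses match those of the previous theorem exactly once you identify $\tilde M,\tilde N$. Your similarity and $\rho(BC)=\rho(CB)$ steps are correct, but they merely undo the change of variables you introduced; the straight substitution makes them unnecessary.
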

\section{Numerical Results}\label{sec:six}
In Section~\ref{sec:three}, we have described TVD implementation for solving a large class problems (that is complementarity problem as well as EHL problems) using PAQIF algorithm. In this section, we  investigate the performance of the mentioned splittings. However, in convection-diffusion problem projection on convex set is not required so in that case we use AQIF algorithm.
For linear case study, we consider analytical solution as $u=x^{4}+y^{4}$ from Oosterlee \cite{Oosterlee}, diffusion coefficient $\epsilon=10^{-6}$ and $\kappa=0.0,1/3,-1.0$.
The Dirichlet boundary is imposed for all test cases on domain $\Omega=\Big\{ (x,y); -1 \le x \le 1,-1 \le y \le 1 \Big\}$.
Numerical tests are performed for the problem given as example~\ref{ex:one} using $Ls0$ splitting, $Ls1$ splitting.
The relative error (in $\mathcal{L}_{\infty},\mathcal{L}_{1},\mathcal{L}_{2}$) are plotted in figures(see fig[5-10]).
%
%
%
\begin{figure}[!htbp]
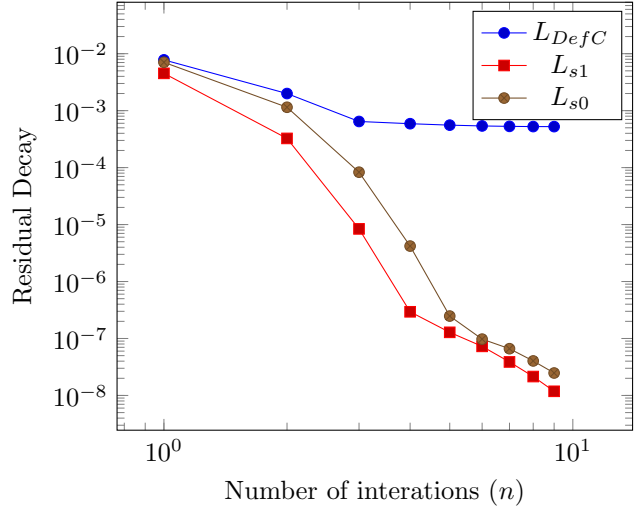

\centering
\tikzpicture
	\axis[
		xlabel= Number of interations ($n$),
		ylabel= Residual Decay,
		xmin=0, xmax = 14,
		ymin=-1.177e-04, ymax = .80e-01,
		ymode=log,
		xmode=log,
		]
	\addplot coordinates {
(1,  7.790e-03)
(2,  1.997e-03)
(3,  6.456e-04)
(4,  5.900e-04)
(5,  5.568e-04)
(6,  5.387e-04)
(7,  5.297e-04)
(8,  5.254e-04)
(9,  5.244e-04)
	};
 	\addplot coordinates {
(1,  4.518e-03)
(2,  3.255e-04)
(3,  8.398e-06)
(4,  2.943e-07)
(5,  1.283e-07)
(6,  7.276e-08)
(7,  3.875e-08)
(8,  2.147e-08)
(9,  1.183e-08)
	};
 	\addplot coordinates {
(1,  7.060e-03)
(2,  1.147e-03)
(3,  8.307e-05)
(4,  4.192e-06)
(5,  2.476e-07)
(6,  9.746e-08)
(7,  6.620e-08)
(8,  4.035e-08)
(9,  2.477e-08)
	};	
	\legend{$L_{DefC}$,$L_{s1}$,$L_{s0}$}
	\endaxis
\endtikzpicture
\caption{Redual Decay of splittings $L_{DefC},L_{s0},L_{s1}$ corresponding value of $\kappa=0.0$}
\label{fig:err}
\end{figure} 
\begin{figure}[!htbp]
\centering
\tikzpicture
	\axis[
		xlabel= Number of interations ($n$),
		ylabel= Residual Decay,
		xmin=0, xmax = 14,
		ymin=-1.177e-04, ymax = .80e-01,
		ymode=log,
		xmode=log,
		]
	\addplot coordinates {
(1,  7.806e-03)
(2,  7.203e-03)
(3,  6.504e-03)
(4,  5.954e-03)
(5,  5.619e-03)
(6,  5.432e-03)
(7,  5.334e-03)
(8,  5.287e-03)
(9,  5.265e-03)
	};
 	\addplot coordinates {
(1,  5.286e-03)
(2,  3.928e-04)
(3,  1.183e-05)
(4,  1.077e-07)
(5,  2.910e-08)
(6,  2.158e-08)
(7,  1.215e-08)
(8,  7.312e-09)
(9,  4.147e-09)
	};
 	\addplot coordinates {
(1,  7.052e-03)
(2,  1.056e-03)
(3,  7.325e-05)
(4,  3.481e-06)
(5,  5.456e-08)
(6,  3.997e-08)
(7,  2.227e-08)
(8,  1.291e-08)
(9,  8.669e-09)
	};	
	\legend{$L_{DefC}$,$L_{s1}$,$L_{s0}$}
	\endaxis
\endtikzpicture
\caption{Redual Decay of splittings $L_{DefC},L_{s0},L_{s1}$ corresponding value of $\kappa=1/3$}
\label{fig:err2}
\end{figure}
\begin{figure}[!htbp]
\centering
\tikzpicture
	\axis[
		xlabel= Mesh size ($N$),
		ylabel= Relative error,
		ymode=log,
		xmode=log,
		]
	\addplot
coordinates {
(16^2,  1.1757906e-02)
(32^2,  1.76038e-03)
(64^2,  2.57573e-04)
(128^2,  3.47087e-05)
(256^2,  4.54820e-06)
(512^2,  6.02630e-07)
	};
	\addplot
coordinates {
(16^2,  2.25826e-03)
(32^2,  3.32640e-04)
(64^2,  4.20422e-05)
(128^2, 5.37451e-06)
(256^2,  6.78313e-07)
(512^2,  8.51091e-08)
	};	
  \addplot
coordinates {
(16^2,  1.83078e-02)
(32^2,  2.72048e-03)
(64^2,  3.43166e-04)
(128^2,  4.37263e-05)
(256^2,  5.51381e-06)
(512^2,  6.91644e-07)
	};  
	
	\legend{$\mathcal{L}_{\infty}$,$\mathcal{L}_{1}$,$\mathcal{L}_{2}$}
	\endaxis
\endtikzpicture
\caption{Relative Error of splittings $L_{s0}$ corresponding value of $\kappa=1/3$}
\label{fig:err3}
\end{figure}
\begin{figure}[!htbp]
\centering
\tikzpicture
	\axis[
		xlabel= Mesh size ($N$),
		ylabel= Relative error,
		ymode=log,
		xmode=log,
		]
	\addplot
coordinates {
(16^2,  1.24738e-02)
(32^2,  2.00172e-03)
(64^2,  5.88728e-04)
(128^2,  1.84579e-04)
(256^2,  5.06126e-05)
(512^2,  1.28329e-05)
	};
	\addplot
coordinates {
(16^2,  1.50246e-03)
(32^2,  1.73122e-04)
(64^2,  6.97083e-05)
(128^2, 2.31011e-05)
(256^2,  6.64633e-06)
(512^2,  1.78059e-06)
	};	
  \addplot
coordinates {
(16^2,  1.36309e-02)
(32^2,  1.68694e-03)
(64^2,  6.33640e-04)
(128^2,  2.08905e-04)
(256^2,  5.93352e-05)
(512^2,  1.57608e-05)
	};  
	
	\legend{$\mathcal{L}_{\infty}$,$\mathcal{L}_{1}$,$\mathcal{L}_{2}$}
	\endaxis
\endtikzpicture
\caption{Relative Error of splittings $L_{s0}$ corresponding value of $\kappa=0.0$}
\label{fig:err4}
\end{figure}
\begin{figure}[!htbp]
\centering
\tikzpicture
	\axis[
		xlabel= Mesh size ($N$),
		ylabel= Relative error,
		ymode=log,
		xmode=log,
		]
	\addplot
coordinates {
(16^2,  1.62417e-02)
(32^2,  1.01696e-02)
(64^2,  3.89903e-03)
(128^2, 1.20459e-03)
(256^2, 3.42856e-04)
(512^2, 9.05700e-05)
	};
	\addplot
coordinates {
(16^2,  2.30732e-03)
(32^2,  1.03223e-03)
(64^2,  3.65800e-04)
(128^2, 1.05973e-04)
(256^2, 2.84576e-05)
(512^2, 7.36748e-06)
	};	
  \addplot
coordinates {
(16^2,  2.04151e-02)
(32^2,  9.53668e-03)
(64^2,  3.32527e-03)
(128^2, 9.49264e-04)
(256^2, 2.52429e-04)
(512^2, 6.49791e-05)
	};  
	
	\legend{$\mathcal{L}_{\infty}$,$\mathcal{L}_{1}$,$\mathcal{L}_{2}$}
	\endaxis
\endtikzpicture
\caption{Relative Error of splittings $L_{s0}$ corresponding value of $\kappa=-1.0$}
\label{fig:err5}
\end{figure}

\begin{figure}[!htbp]
\centering
\tikzpicture
	\axis[
		xlabel= Mesh size ($N$),
		ylabel= Relative error,
		ymode=log,
		xmode=log,
		]
	\addplot
coordinates {
(16^2,  1.1806e-02)
(32^2,  2.6203e-03)
(64^2,  5.70504e-04)
(128^2,  1.0694e-04)
(256^2,  1.9619e-05)
(512^2,  3.4032e-06)
	};
	\addplot
coordinates {
(16^2,  2.2562406e-03)
(32^2,  3.575540e-04)
(64^2,  4.33084e-05)
(128^2, 5.45271e-06)
(256^2,  6.76793e-07)
(512^2,  8.44721e-08)
	};	
  \addplot
coordinates {
(16^2,  1.83208e-02)
(32^2,  2.92872e-03)
(64^2,  3.64904e-04)
(128^2,  4.73857e-05)
(256^2,  6.09179e-06)
(512^2,  7.74616e-07)
	};  
	
	\legend{$\mathcal{L}_{\infty}$,$\mathcal{L}_{1}$,$\mathcal{L}_{2}$}
	\endaxis
\endtikzpicture
\caption{Relative Error of splittings $L_{s1}$ corresponding value of $\kappa=1/3$}
\label{fig:err6}
\end{figure}
\begin{figure}[!htbp]
\centering
\tikzpicture
	\axis[
		xlabel= Mesh size ($N$),
		ylabel= Relative error,
		ymode=log,
		xmode=log,
		]
	\addplot
coordinates {
(16^2,  1.27672e-02)
(32^2,  2.73792e-03)
(64^2,  6.22587e-04)
(128^2, 2.07084e-04)
(256^2, 3.98623e-05)
(512^2, 1.58405e-05)
	};
	\addplot
coordinates {
(16^2,  1.49677e-03)
(32^2,  1.80364e-04)
(64^2,  7.33006e-05)
(128^2, 2.37525e-05)
(256^2, 6.73718e-06)
(512^2, 1.76203e-06)
	};	
  \addplot
coordinates {
(16^2,  1.36680e-02)
(32^2,  1.82037e-03)
(64^2,  6.63061e-04)
(128^2, 2.13343e-04)
(256^2, 5.99206e-05)
(512^2, 1.58361e-05)
	};  
	
	\legend{$\mathcal{L}_{\infty}$,$\mathcal{L}_{1}$,$\mathcal{L}_{2}$}
	\endaxis
\endtikzpicture
\caption{Relative Error of splittings $L_{s1}$ corresponding value of $\kappa=0.0$}
\label{fig:err7}
\end{figure}

\begin{figure}[!htbp]
\centering
\tikzpicture
	\axis[
		xlabel= Mesh size ($N$),
		ylabel= Relative error,
		ymode=log,
		xmode=log,
		]
	\addplot
coordinates {
(16^2,  2.32470e-03)
(32^2,  1.01308e-03)
(64^2,  3.78032e-04)
(128^2, 1.07979e-04)
(256^2, 2.86739e-05)
(512^2, 7.39007e-06)
	};
	\addplot
coordinates {
(16^2,  1.56030e-02)
(32^2,  1.00995e-02)
(64^2,  4.35094e-03)
(128^2, 1.44691e-03)
(256^2, 4.47319e-04)
(512^2, 1.28974e-04)
	};	
  \addplot
coordinates {
(16^2,  2.05497e-02)
(32^2,  9.31777e-03)
(64^2,  3.42296e-03)
(128^2, 9.67504e-04)
(256^2, 2.54980e-04)
(512^2, 6.53620e-05)
	};  
	
	\legend{$\mathcal{L}_{\infty}$,$\mathcal{L}_{1}$,$\mathcal{L}_{2}$}
	\endaxis
\endtikzpicture
\caption{Relative Error of splittings $L_{s1}$ corresponding value of $\kappa=-1.0$}
\label{fig:err8}
\end{figure}
       \begin{figure}
        \centering
        \includegraphics[width=12cm,height=12cm,angle =-90,keepaspectratio]{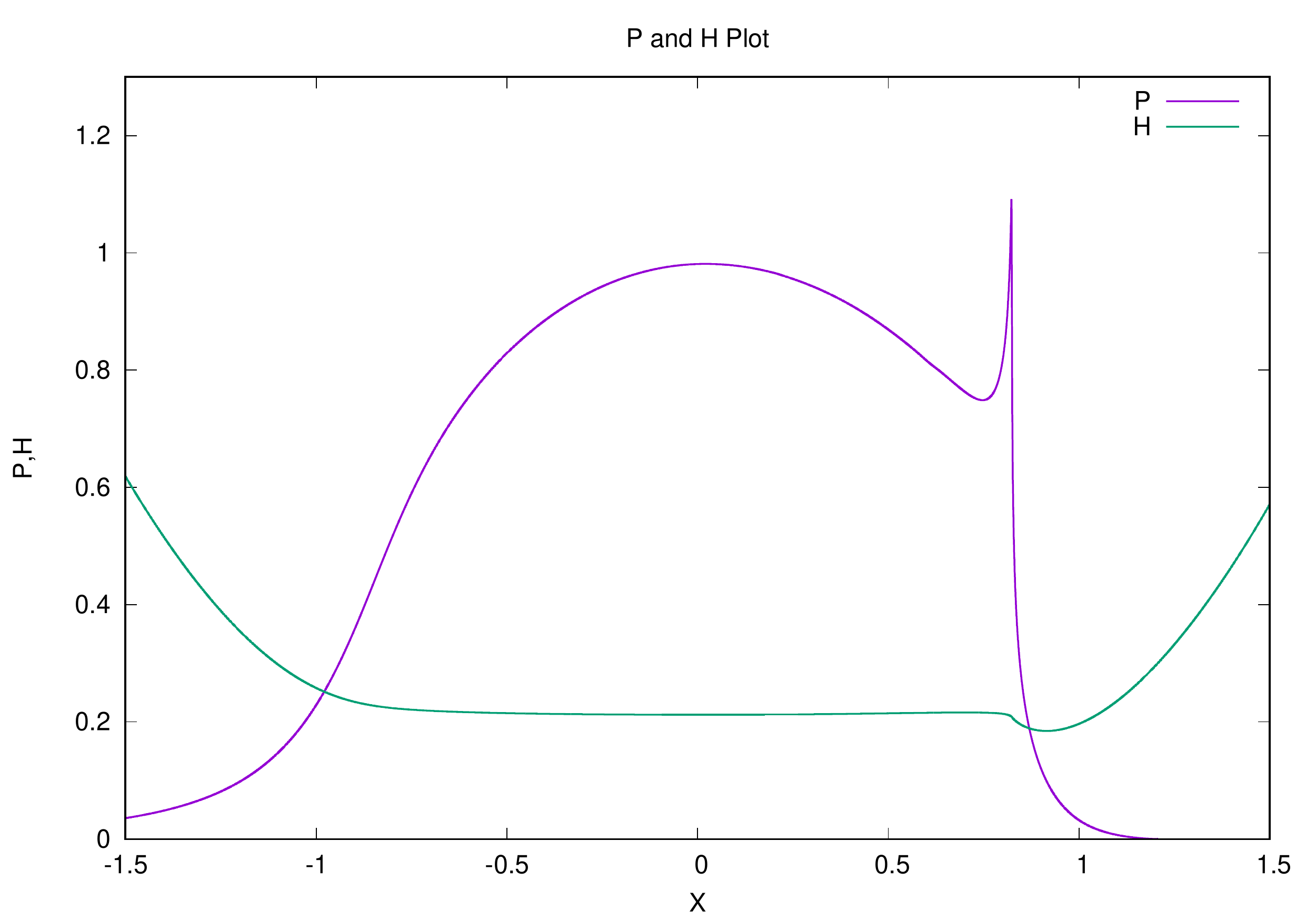}
        \caption{ Steady state line contact case for $G=3500$ , $U=5.5 \times 10^{-11}, W=1.0\times10^{-4}$ $Ls1$ for $\kappa=1/3$}
        \label{figg5}
    \end{figure}
           \begin{figure}
        \centering
        \includegraphics[width=12cm,height=12cm,angle =-90,keepaspectratio]{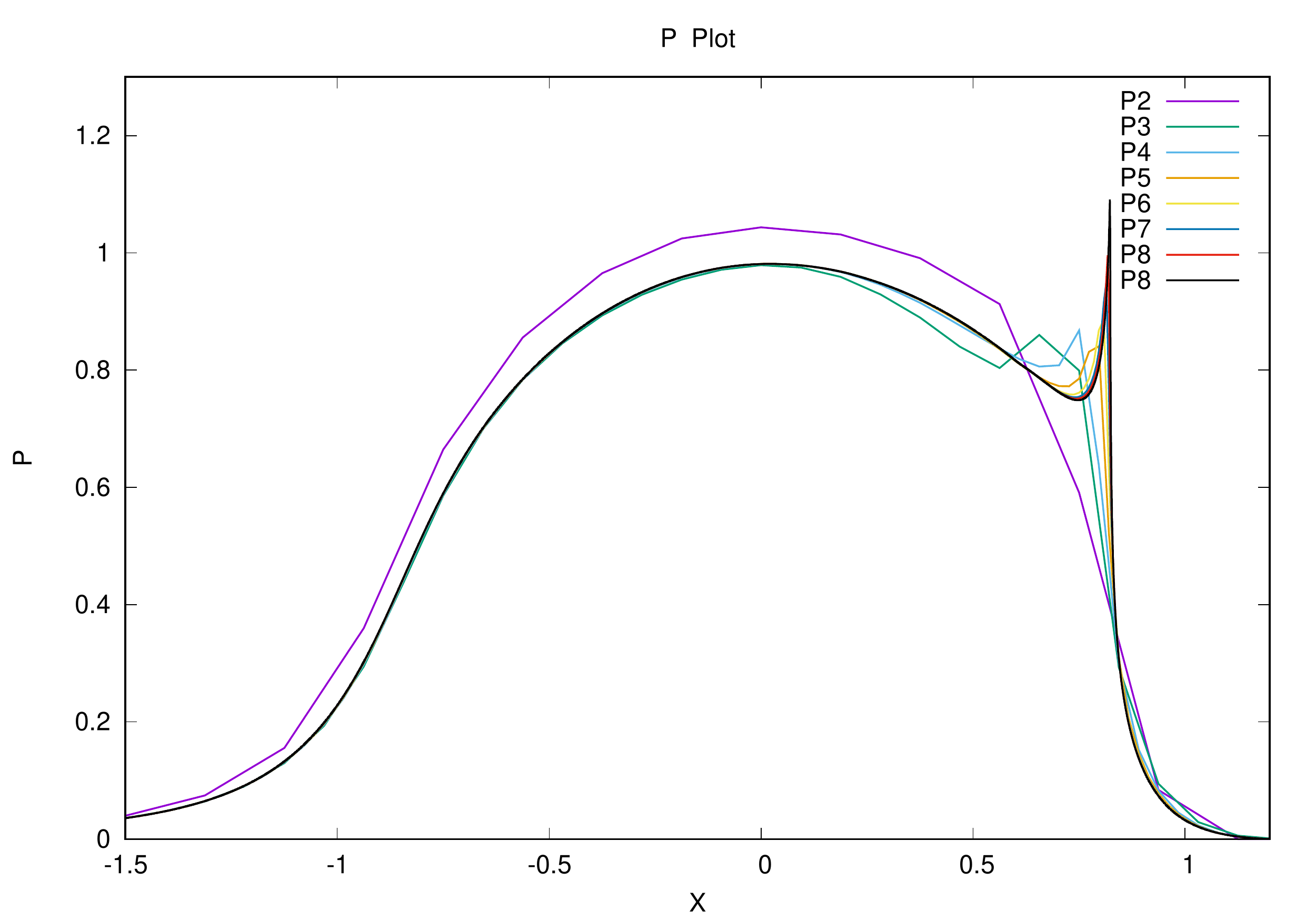}
        \caption{line contact pressure profile plots on grid size $h=32,64,128,256,512,.$}
        \label{figg6}
    \end{figure}
            \begin{figure}
        \centering
        \includegraphics[width=12cm,height=12cm,angle =-90,keepaspectratio]{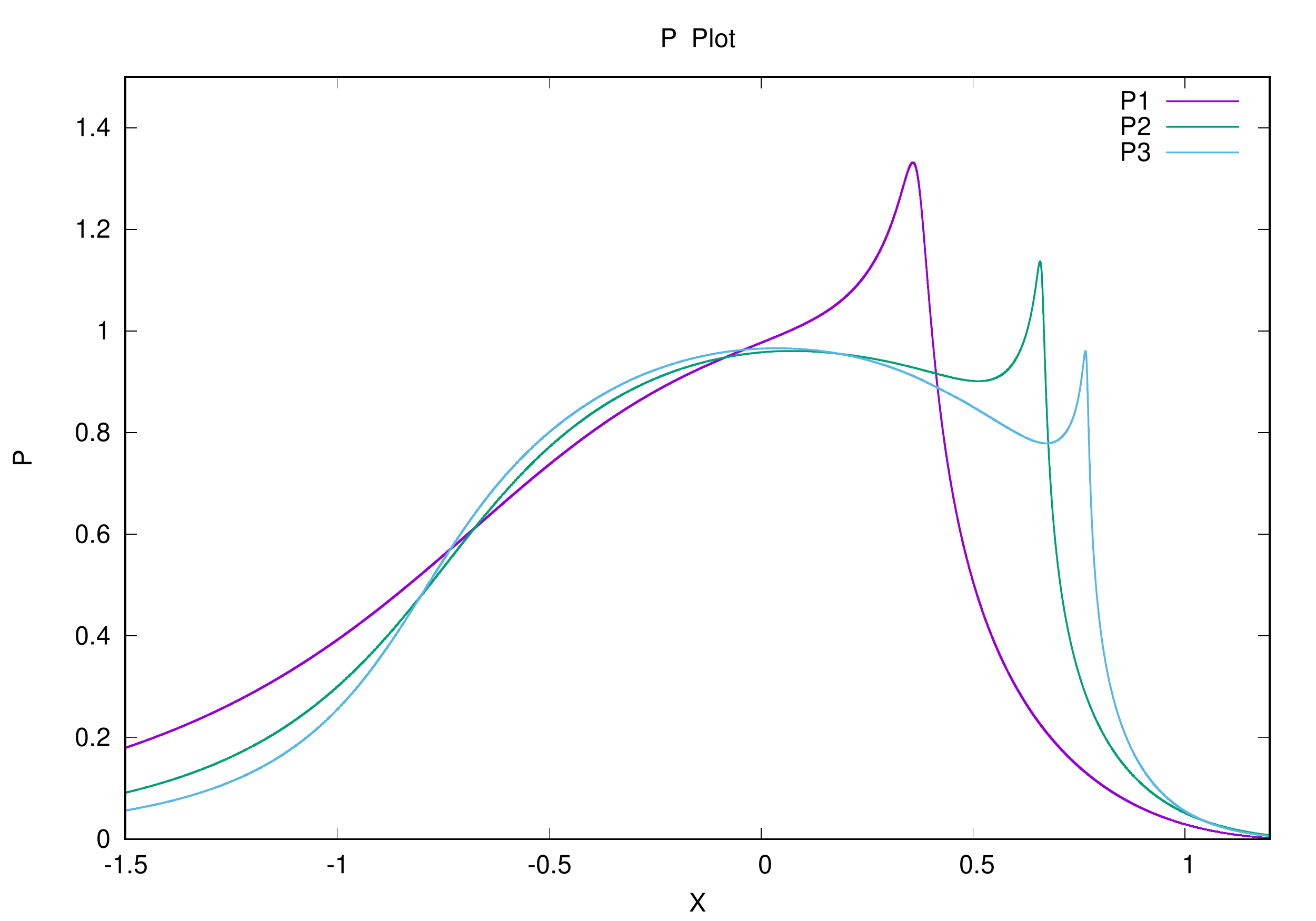}
        \caption{Line contact pressure height on varying load conditions $W=10.0\times10^{-4},W=20.0\times10^{-4},W=30.0\times10^{-4}$}
        \label{figg6}
    \end{figure}
            \begin{figure}
        \centering
        \includegraphics[width=12cm,height=12cm,angle =-90,keepaspectratio]{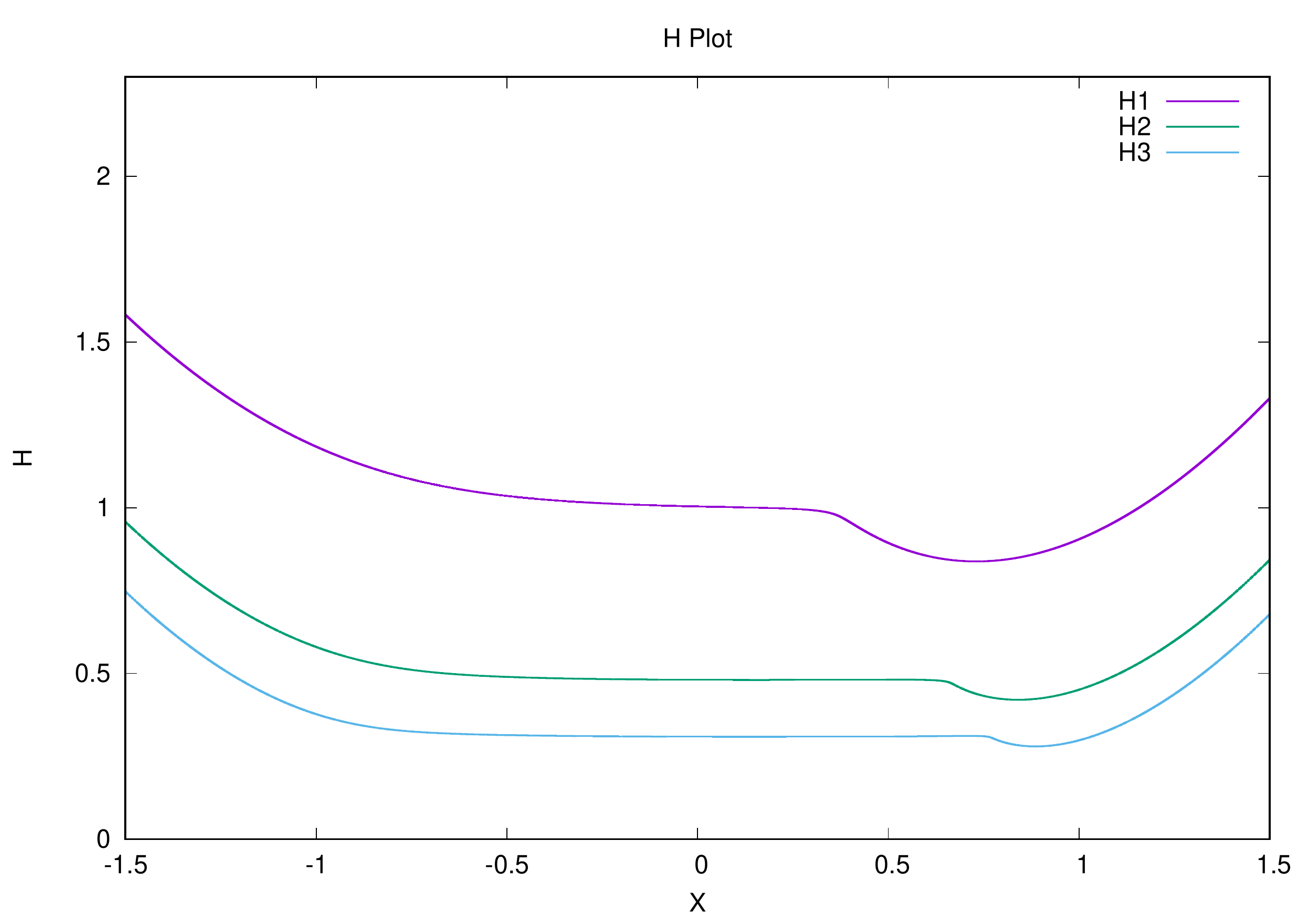}
        \caption{Line contact film thickness $H$ on varying load conditions 
        $W=10.0\times10^{-4},W=20.0\times10^{-4},W=30.0\times10^{-4}$}
        \label{figg6}
    \end{figure}     
    
      \begin{figure}
        \centering
        \includegraphics[width=12cm,height=12cm,angle =-90,keepaspectratio]{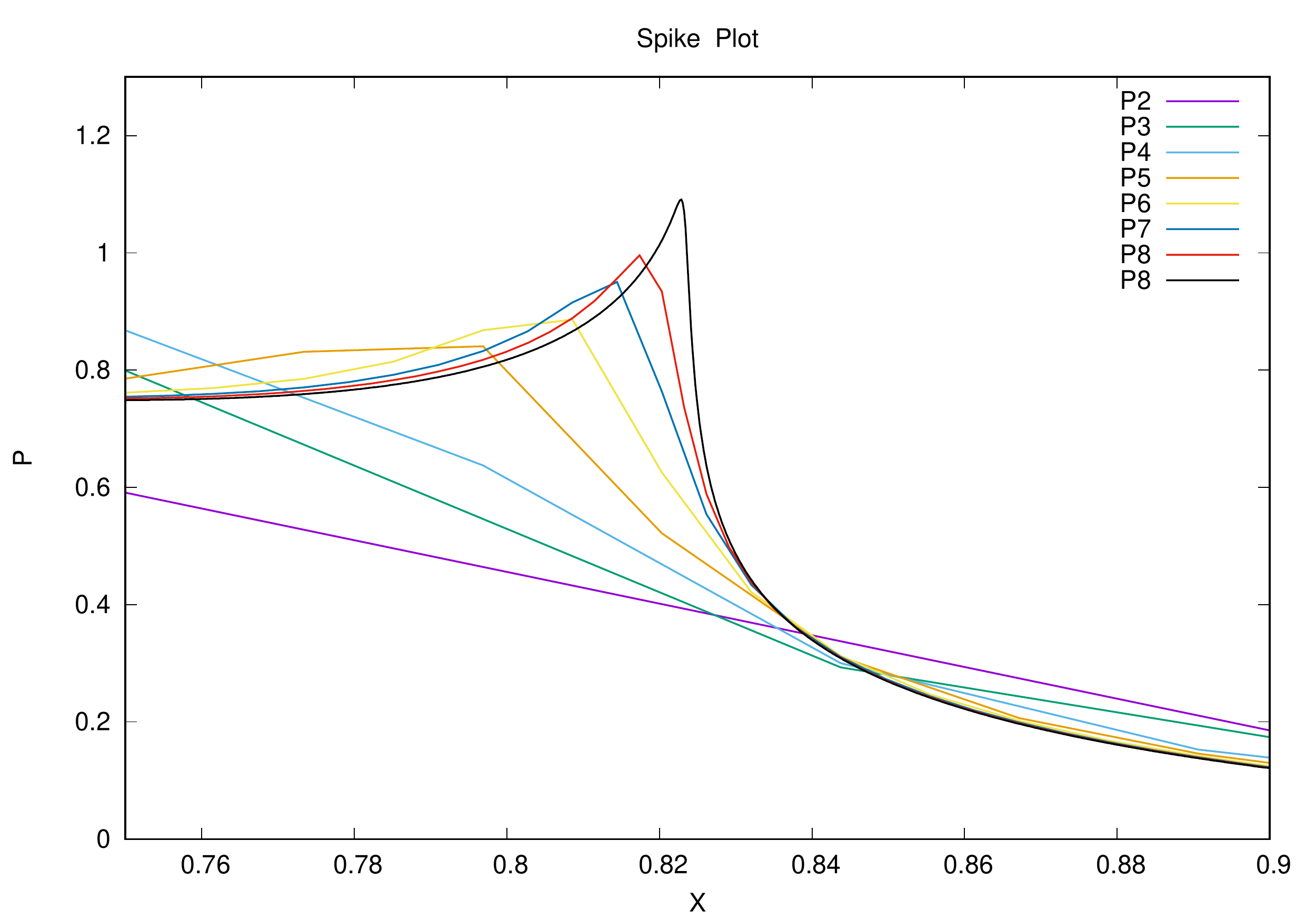}
        \caption{Line contact pressure spike height on different grid size
         $h=32,64,128,256,512,.$}
        \label{figg6}
    \end{figure}
          \begin{figure}
        \centering
        \includegraphics[width=12cm,height=12cm,angle =0,keepaspectratio]{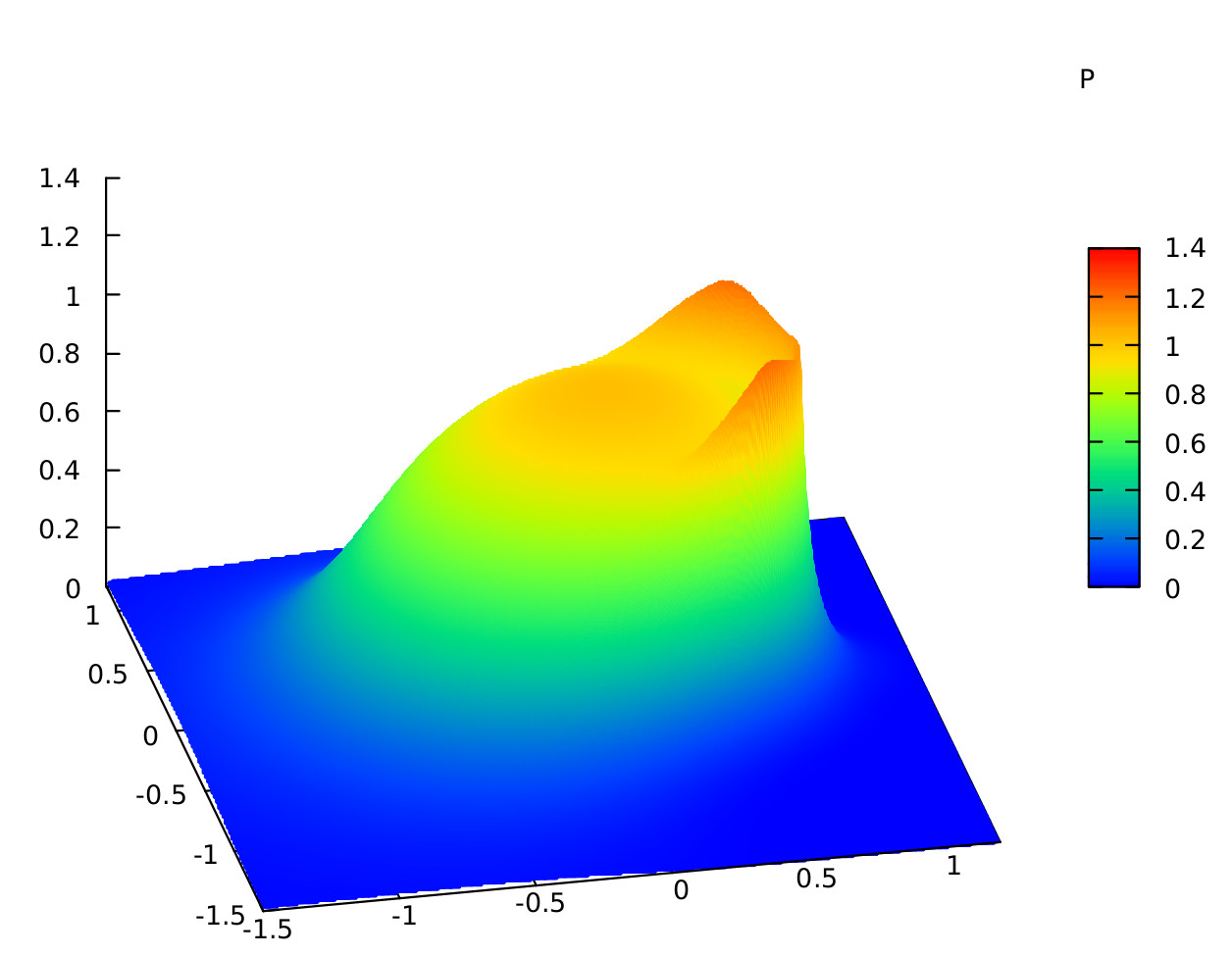}
        \caption{Point contact case EHL solution of pressure profile for $M=20$,$L=10$}
        \label{figg7}
    \end{figure}
          \begin{figure}
        \centering
        \includegraphics[width=12cm,height=12cm,angle =0,keepaspectratio]{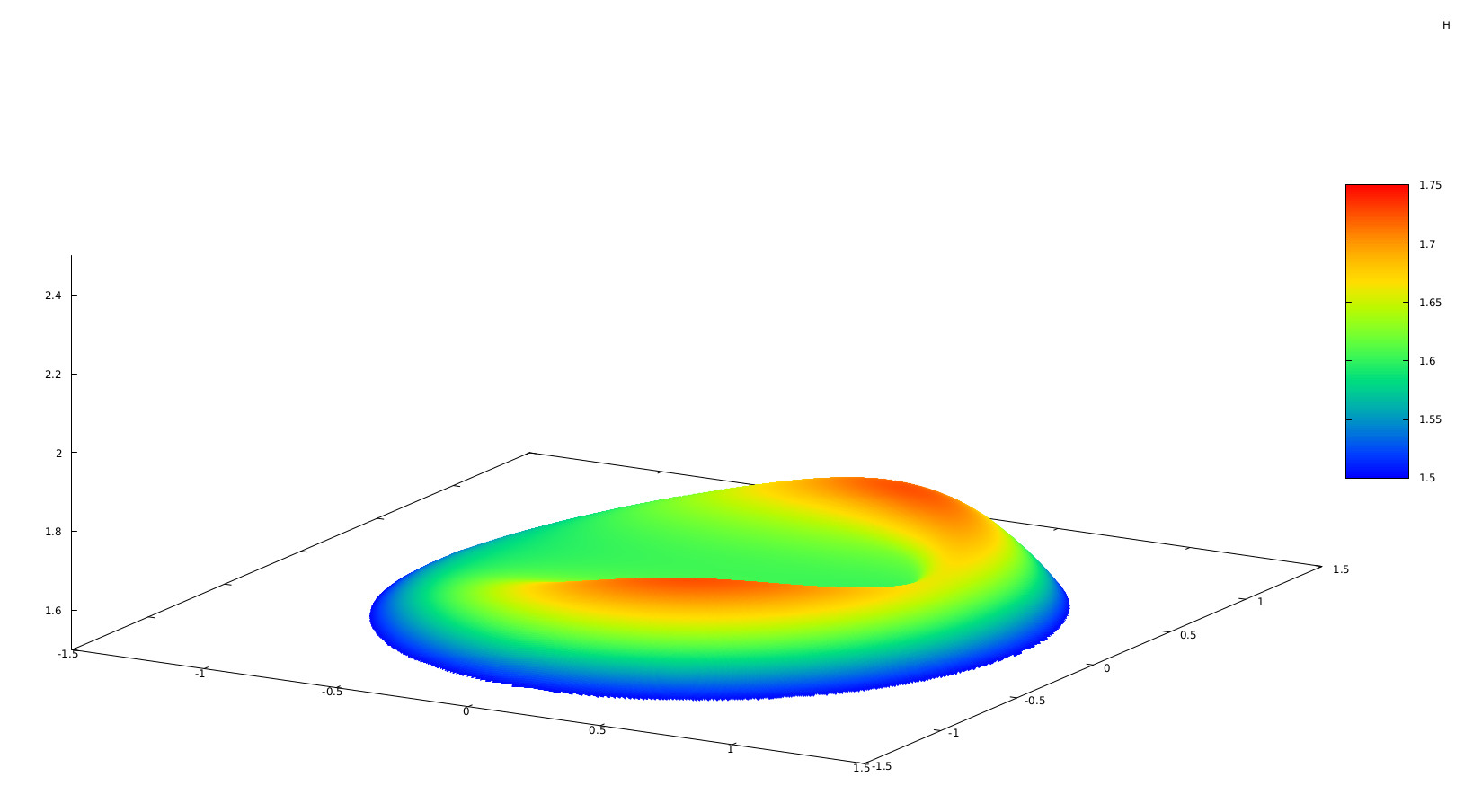}
        \caption{Point contact case inverted film thickness profile for $M=20,L=10$}
        \label{figg8}
    \end{figure}
The $L_{2}$ norm error is evaluated in the following way
\begin{align}
\label{eqn96}
\mathcal{L}_{2}(k,k-1)= \sqrt{H^{d}\sum\Big(\tilde{u}^{k-1}-I_{h}^{H}\bar{u}^{k}\Big)^{2}},
\end {align}
where $H$ is the mesh size on grid $k-1$, $\bar{u}^{k}$ is the converged solution on grid $k$ and $d$ 
denotes the dimension of the problem. The order of convergence is derived as
\begin{align}
\label{eqn97}
p_{2}=\frac{\log \mathcal{L}_{2}(k-1,k-2)-\log \mathcal{L}_{2}(k,k-1)}{\log 2}, 
\end{align}
where $p_{2}$ is the order of discretization in $\mathcal{L}_{2}$ norm.
$\mathcal{L}_{\infty}$ and $\mathcal{L}_{1}$-error are also computed in similar way. From the numerical experiments, we observe that splitting $Ls0$ and $Ls1$ always show fast residual decay compare to classical defect-correction. Fig.~\ref{fig:err} and Fig.~\ref{fig:err2} present the residual decay results for $Ls0$ splitting , $Ls1$ splitting and classical defect-correction technique for $\kappa=0.0,1/3,-1.0$.
Moreover, residual decay of splitting $Ls1$ is more better than splitting $Ls0$. On the other hand, we observe that splitting $Ls0$ has larger range of robustness ($-1.0 \le \kappa \le 0.9$) than splitting $Ls1$ ($-1.0 \le \kappa \le 0.8$).\\
For solving EHL case , we take hertizian pressure distribution as an initial pressure guess.
We perform numerical experiments on EHL model defined in Section~\ref{sec:one}(3.3).
We take  Moes (\cite{moes}) dimensionless parameters $M=20$ and $L=10$.
For the point contact case, a typical pressure profile  and film thickness profile is shown in Fig[16] and Fig[17].
We fix the parameter $\alpha=1.7 \times 10^{-8}$ over computational domain $\Omega=[-2.5,2.5]\times[-2.5,2.5]$. In all cases , we take finer grid points up to $(1024+1)\times (1024+1)$ and coarse grid points up to $(16+1) \times (16+1)$.
Comparisons of relative error in $\mathcal{L}_{2},\mathcal{L}_{1}$ and $\mathcal{L}_{\infty}$ norms between
$\kappa=0.0,1/3,-1.0$ splittings $L_{hs1}$ and $L_{hs2}$ (see section 3.3.2) are performed which are presented in Fig[18-23].
The solutions EHL line contact case (see example 6) is plotted in Fig[11-15].
It is observed that pressure peak get resolved when we increase the grid size of computational domain (see fig.[12,15]). It  is also noted that as load parameter increases then pressure peak height and film thickness get supressed (see Fig[13] and Fig[14]).
\begin{figure}[!htbp]
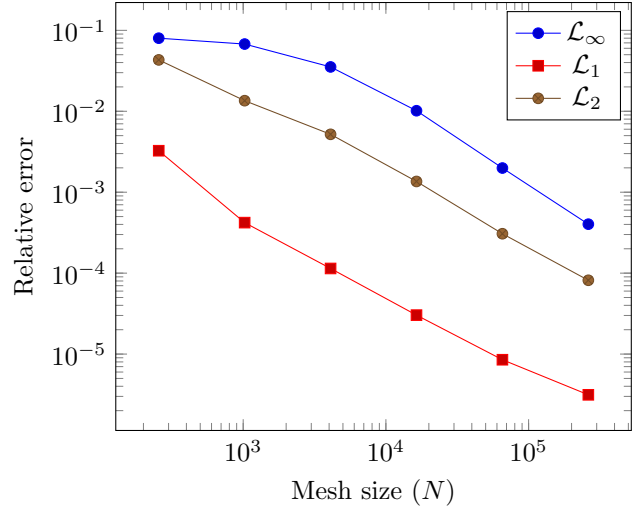

\centering
\tikzpicture
	\axis[
		xlabel= Mesh size ($N$),
		ylabel= Relative error,
		ymode=log,
		xmode=log,
		]
	\addplot
coordinates {
(16^2,  7.99935e-02)
(32^2,  6.76884e-02)
(64^2,  3.53135e-02)
(128^2, 1.01542e-02)
(256^2, 1.98897e-03)
(512^2, 4.02685e-04)
	};
	\addplot
coordinates {
(16^2,  3.25500e-03)
(32^2,  4.20806e-04)
(64^2,  1.14226e-04)
(128^2, 3.02821e-05)
(256^2, 8.51309e-06)
(512^2, 3.13893e-06)
	};	
  \addplot
coordinates {
(16^2,  4.31253e-02)
(32^2,  1.35161e-02)
(64^2,  5.18955e-03)
(128^2, 1.35755e-03)
(256^2, 3.06834e-04)
(512^2, 8.16286e-05)
	};  
	
	\legend{$\mathcal{L}_{\infty}$,$\mathcal{L}_{1}$,$\mathcal{L}_{2}$}
	\endaxis
\endtikzpicture
\caption{Relative Error of EHL case $M=20,L=10$ splittings $L_{hs1}$ corresponding value of $\kappa=0.0$}
\label{fig:err8}
\end{figure}
\begin{figure}[!htbp]
\centering
\tikzpicture
	\axis[
		xlabel= Mesh size ($N$),
		ylabel= Relative error,
		ymode=log,
		xmode=log,
		]
	\addplot
coordinates {
(16^2,  1.28495e-01)
(32^2,  6.61681e-02)
(64^2,  3.34724e-02)
(128^2, 8.88278e-03)
(256^2, 1.64936e-03)
(512^2, 2.79280e-04)
	};
	\addplot
coordinates {
(16^2,  3.46499e-03)
(32^2,  4.17570e-04)
(64^2,  1.07470e-04)
(128^2, 2.70266e-05)
(256^2, 7.15546e-06)
(512^2, 2.77208e-06)
	};	
  \addplot
coordinates {
(16^2,  4.97302e-02)
(32^2,  1.40651e-02)
(64^2,  5.05401e-03)
(128^2, 1.23452e-03)
(256^2, 2.47734e-04)
(512^2, 6.00344e-05)
	};  
	
	\legend{$\mathcal{L}_{\infty}$,$\mathcal{L}_{1}$,$\mathcal{L}_{2}$}
	\endaxis
\endtikzpicture
\caption{Relative Error of EHL case $M=20,L=10$ splittings $L_{hs1}$ corresponding value of $\kappa=1/3$}
\label{fig:err9}
\end{figure}
\begin{figure}[!htbp]
\centering
\tikzpicture
	\axis[
		xlabel= Mesh size ($N$),
		ylabel= Relative error,
		ymode=log,
		xmode=log,
		]
	\addplot
coordinates {
(16^2,  7.50604e-02)
(32^2,  7.55099e-02)
(64^2,  4.53322e-02)
(128^2, 1.61611e-02)
(256^2, 4.50872e-03)
(512^2, 1.10782e-03)
	};
	\addplot
coordinates {
(16^2,  2.97122e-03)
(32^2,  5.91844e-04)
(64^2,  1.91253e-04)
(128^2, 5.75179e-05)
(256^2, 1.67111e-05)
(512^2, 5.21125e-06)
	};	
  \addplot
coordinates {
(16^2,  4.14394e-02)
(32^2,  1.69667e-02)
(64^2,  7.61954e-03)
(128^2, 2.50645e-03)
(256^2, 6.94586e-04)
(512^2, 1.89643e-04)
	};  
	
	\legend{$\mathcal{L}_{\infty}$,$\mathcal{L}_{1}$,$\mathcal{L}_{2}$}
	\endaxis
\endtikzpicture
\caption{Relative Error of EHL case $M=20,L=10$ splittings $L_{hs1}$ corresponding value of $\kappa=-1.0$}
\label{fig:err10}
\end{figure}
\begin{figure}[!htbp]
\centering
\tikzpicture
	\axis[
		xlabel= Mesh size ($N$),
		ylabel= Relative error,
		ymode=log,
		xmode=log,
		]
	\addplot
coordinates {
(16^2,  7.91753e-02)
(32^2,  6.76405e-02)
(64^2,  3.53098e-02)
(128^2, 1.01543e-02)
(256^2, 1.99380e-03)
(512^2, 4.04313e-04)
	};
	\addplot
coordinates {
(16^2,  3.24093e-03)
(32^2,  4.21527e-04)
(64^2,  1.14185e-04)
(128^2, 3.02794e-05)
(256^2, 8.51277e-06)
(512^2, 3.13219e-06)
	};	
  \addplot
coordinates {
(16^2,  4.29201e-02)
(32^2,  1.35422e-02)
(64^2,  5.18823e-03)
(128^2, 1.35750e-03)
(256^2, 3.07193e-04)
(512^2, 8.15121e-05)
	};  
	
	\legend{$\mathcal{L}_{\infty}$,$\mathcal{L}_{1}$,$\mathcal{L}_{2}$}
	\endaxis
\endtikzpicture
\caption{Relative Error of EHL case $M=20,L=10$ splittings $L_{hs2}$ corresponding value of $\kappa=0.0$}
\label{fig:err11}
\end{figure}
\begin{figure}[!htbp]
\centering
\tikzpicture
	\axis[
		xlabel= Mesh size ($N$),
		ylabel= Relative error,
		ymode=log,
		xmode=log,
		]
	\addplot
coordinates {
(16^2,  1.27894e-01)
(32^2,  6.61606e-02)
(64^2,  8.88371e-03)
(128^2, 3.34692e-03)
(256^2, 1.65390e-03)
(512^2, 2.80907e-04)
	};
	\addplot
coordinates {
(16^2,  3.45271e-03)
(32^2,  4.17669e-04)
(64^2,  1.07437e-04)
(128^2, 2.70267e-05)
(256^2, 7.15902e-06)
(512^2, 2.76808e-06)
	};	
  \addplot
coordinates {
(16^2,  4.95561e-02)
(32^2,  1.40784e-02)
(64^2,  5.05304e-03)
(128^2, 1.23467e-03)
(256^2, 2.48217e-04)
(512^2, 5.99858e-05)
	};  
	
	\legend{$\mathcal{L}_{\infty}$,$\mathcal{L}_{1}$,$\mathcal{L}_{2}$}
	\endaxis
\endtikzpicture
\caption{Relative Error of EHL case $M=20,L=10$ splittings $L_{hs2}$ corresponding value of $\kappa=1/3$}
\label{fig:err12}
\end{figure}
\begin{figure}[!htbp]
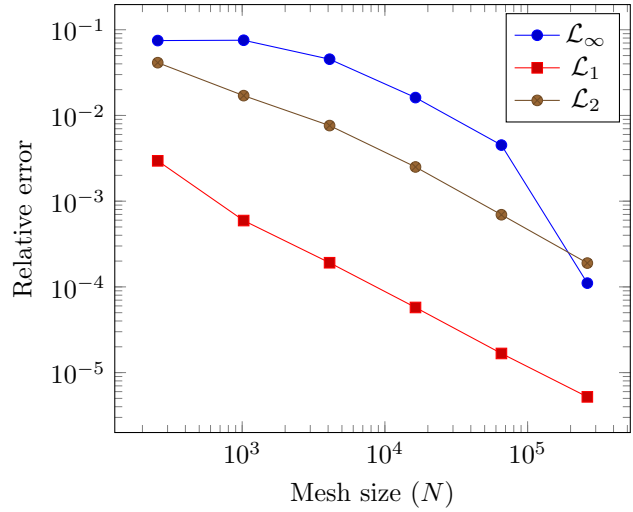

\centering
\tikzpicture
	\axis[
		xlabel= Mesh size ($N$),
		ylabel= Relative error,
		ymode=log,
		xmode=log,
		]
	\addplot
coordinates {
(16^2,  7.47880e-02)
(32^2,  7.54384e-02)
(64^2,  4.53370e-02)
(128^2, 1.61613e-02)
(256^2, 4.51054e-03)
(512^2, 1.10616e-04)
	};
	\addplot
coordinates {
(16^2,  2.95607e-03)
(32^2,  5.94019e-04)
(64^2,  1.91320e-04)
(128^2, 5.75274e-05)
(256^2, 1.67195e-05)
(512^2, 5.21053e-06)
	};	
  \addplot
coordinates {
(16^2,  4.12735e-02)
(32^2,  1.70337e-02)
(64^2,  7.62081e-03)
(128^2, 2.50667e-03)
(256^2, 6.94549e-04)
(512^2, 1.89516e-04)
	};  
	
	\legend{$\mathcal{L}_{\infty}$,$\mathcal{L}_{1}$,$\mathcal{L}_{2}$}
	\endaxis
\endtikzpicture
\caption{Relative Error of EHL case $M=20,L=10$ splittings $L_{hs2}$ corresponding value of $\kappa=-1.0$}
\label{fig:err13}
\end{figure}

%
\section{Conclusion}\label{sec:seven}
A PAQIF/AQIF parallel algorithm is introduced to solve a wider class of problems emerge in linear and nonlinear elliptic PDEs and, complementarity problems (In particular applications in tribology related EHL problems).
The PAQIF algorithm provides the most natural, robust and systematic  way to solve complementarity type problems ( in particular EHL problems) on parallel computers once the Jacobian matrix of discretized system is reasonably approximated into a banded matrix system and then projecting the system solution onto a convex set. In the present work, a detail discussion is carried out to move forward in this direction by giving a class of splitting ( in other word providing a suitable preconditioner for the original discrete problem). A convergence criteria of such approximated splitting is also discribed by giving a mathematical justifications.
The key concept of using the mentioned splitting to accelerate artificial diffusion only in the region of steep gradient of solution profile and to enhance the accuracy on the other portion (smooth region of solution profile) of the domain. 
Additionally, the hybrid line splitting has been designed with help of a switcher which depends upon the magnitude of $\epsilon/h$.
The derived switcher is important entity as it noticeably allows us to resolve the ill-conditioning of the discretized matrix. The robustness of the splittings are interpreted by carrying out a series of numerical experiments.
As an application part, a limiter based direct parallel solver is introduced for solving EHL point and line contact problems in the form of LCP on parallel computers. An accurate pressure profile in EHL model is achieved by sweeping out the iterations in $x$ and $y$ direction alternatively.
Numerical experiments confirm that the performance of direct parallel solvers are robust not only for linear cases but also for EHL models too.
The above treatment can be easily extendable in time dependent EHL as well as Thermo-elastic Lubrication model. 
\section{Acknowledgment}
First author got full support by DST-SERB Project reference no.PDF/2017/000202 under N-PDF fellowship program 
and working group at the Tata Institute of Fundamental Research, TIFR-CAM, Bangalore.
First author is also highly indebted to Prof. Pravir Dutt, IIT Kanpur for fruitful suggestions and guidance during author's IIT Kanpur visit.
\appendix
\section{Some Notation used in EHL model}\label{app:one}
$p_{H} \rightarrow$ Maximum Hertzian pressure.\\
$\eta_{0}\rightarrow$ Ambient pressure viscosity.\\
$H_{00}\rightarrow$  Central offset film thickness.\\
$a\rightarrow$ Radius of point contact circle.\\
$\alpha \rightarrow$ Pressure viscosity coefficient.\\
$u_{s} = u_{1}+u_{2}$, where $u_{1}$ upper surface velocity and $u_{2}$ lower surface velocity respectively.\\
$p_{0} \rightarrow$ Constant ($p_{0}=1.98 \times 10^{8}$), $z$ is pressure viscosity index ($z=0.68$).\\
$R \rightarrow$ Reduced radius of curvature defined as $R^{-1}=R_{1}^{-1} +R_{2}^{-1}$,\\
where $R_{1}$ and $R_{2}$ are curvature of upper contact surface and lower contact surface respectively.\\
$L$ and $M$ are Moes parameters and they are related as below.\\
$ L=G(2U)^{\frac{1}{4}}, M=W(2U)^{-\frac{1}{2}}$, where \\
$2U=\dfrac{(\eta_{0}u_{s} )}{(E^{'}R)}, W=\dfrac{F}{E'R},p_{H}=\dfrac{(3F)}{(2 \pi a^{2})}$.\\
$\sigma^{n+1}=u^{n+1}-u^{n}$ denote as difference between latest approximation solution $u^{n+1}$ and its predecessor $u^{n}$.

%
%



\end{document}